\documentclass[10pt,twoside,a4paper,reqno]{amsart}

\usepackage{amsmath,amsthm,amssymb,amsfonts}
\usepackage{fullpage}
\usepackage{hyperref}
\usepackage{enumerate}

\newtheorem{conjecture}{Conjecture}
\newtheorem{question}{Question}
\newtheorem{theorem}{Theorem}

\newtheorem{proposition}{Proposition}
\newtheorem{corollary}{Corollary}
\newtheorem{remark}{Remark}

\title{Transposition Cayley Spectra and Subgraph Counts}
\author{Boris Shapiro}
\address{Department of Mathematics, Stockholm University,
SE-106 91 Stockholm, Sweden} 
\email{shapiro@math.su.se}
\date{}
\subjclass[2020]{Primary 05C50; Secondary 05C25, 05C60, 20C30, 60J10.}
\keywords{Cayley graph, symmetric group, transposition graph, graph spectrum, inverse spectral problem, spectral moment, interchange process, CayleyPy, symmetric group representations.}

\begin{document}

\begin{abstract}
Let $G$ be a finite simple graph on the vertex set $[n]$, and let
$\Gamma(G)$ be the Cayley graph of the symmetric group $S_n$ generated
by the transpositions corresponding to the edges of $G$.  We study the
inverse spectral problem for the assignment $G\mapsto \Gamma(G)$ through
the group-algebra operator
$A_G=\sum_{\{i,j\}\in E(G)}(ij)$ and its regular spectral moments
$M_k(G)=\operatorname{tr}(A_G^k)$.  The moments admit two all-order
forms: a universal support expansion in non-induced subgraph counts of
$G$, and a representation-theoretic expansion over the irreducible
representations of $S_n$.  We compute the first nontrivial moments
through order six.  In particular, the fourth moment already contains
the Coxeter triangle relation and determines the signed local invariant
$6N_{K_3}(G)-N_{P_3}(G)$, rather than $N_{P_3}(G)$ alone. Consequently,
for triangle-free graphs the spectrum determines the second moment of
the degree sequence.  We also give a simplified sixth-moment formula for
triangle-free graphs, showing that in this class the spectrum determines
$5N_{K_{1,3}}(G)+2N_{P_4}(G)+12N_{C_4}(G)$.  Further structural
consequences include bipartiteness and spectral symmetry of $\Gamma(G)$,
a precise connectedness test, and the identification of the standard
irreducible component with the shifted graph Laplacian $mI-L_G$.

We add several new results.  (i) A Hurwitz-type rank bound: an
identity word of length $k$ can only be supported on a graph $F$ with
$k\ge 2(|V(F)|-c(F))$.  The support coefficients are therefore filtered
by the rank of the graphic matroid, and they are multiplicative under
disjoint union in the exponential sense.  (ii) In top degree the
coefficients add up to the genus-zero Hurwitz numbers
$(2n-2)!\,n^{n-3}$.  (iii) On the permutation modules
$M^{(n-k,k)}$ the operator $mI-A_G$ is the Laplacian of the $k$-token
graph of $G$, so the two-row components of the refined spectrum are
token-graph Laplacian spectra.
Together with the Caputo--Liggett--Richthammer theorem this shows that
$\operatorname{Spec}\Gamma(G)$ determines the algebraic connectivity of
$G$.  (iv) On the class of regular graphs the fourth moment determines the
number of triangles, and the sixth moment determines
$6N_{C_4}+11N_{K_4-e}+8N_{K_4}$; in particular, for triangle-free regular
graphs it determines the number of $4$-cycles.  (v) Before diagonal
specialization of the edge weights, the single weighted fourth regular
trace reconstructs the line graph and hence every connected simple graph.
(vi) We give the complete eighth moment, and we prove that
$M_2,M_4,M_6$ determine neither the number of triangles, nor the degree
sequence, nor connectedness.  (vii) We report computer-assisted
verification of the reconstruction conjecture.  The CayleyPy group
(I.~Koltsov) checked all $274\,668$ graphs on $9$ vertices.  We confirm this
independently, and show that already $M_2,\dots,M_{10}$ separate all
graphs on at most $9$ vertices, whereas $M_2,\dots,M_8$ do not, even for
the number of triangles.  The
paper is positioned as a theoretical spectral-moment complement to the
recent CayleyPy computational programme on Cayley graphs.
\end{abstract}

\maketitle

\section{Introduction}

Let $G$ be a finite simple graph with vertex set $[n]=\{1,\dots,n\}$.
Associate with every edge $\{i,j\}\in E(G)$ the transposition
$(ij)\in S_n$, and put
\[
T_G=\{(ij):\{i,j\}\in E(G)\}.
\]
We consider the Cayley graph
\[
\Gamma(G)=\operatorname{Cay}(S_n,T_G).
\]
Equivalently, $\Gamma(G)$ is the state graph of the interchange process
on $G$ before assigning transition probabilities.  Its adjacency
operator is left multiplication by
\[
A_G=\sum_{\{i,j\}\in E(G)}(ij)
\]
in the group algebra $\mathbb C[S_n]$.

\medskip
The paper is concerned with the following inverse spectral problem.

\begin{question}\label{q:main-inverse}
If two graphs $G_1$ and $G_2$ have cospectral transposition Cayley graphs
$\Gamma(G_1)$ and $\Gamma(G_2)$, must $G_1$ and $G_2$ be isomorphic?
More generally, which invariants of $G$ are determined by
$\operatorname{Spec}(\Gamma(G))$?
\end{question}

This question is motivated by several sources.  Ordinary graph spectra
are notoriously non-rigid; Godsil--McKay switching provides a systematic
source of cospectral non-isomorphic graphs \cite{GodsilMcKay}.  On the
other hand, the Cayley graph $\Gamma(G)$ remembers not only the adjacency
relations in $G$, but also the multiplication rules among the
corresponding transpositions in $S_n$.  Thus it is natural to ask whether
passing from $G$ to $\Gamma(G)$ produces a substantially stronger
spectral invariant.

The same family of operators also appears in the study of random walks
and the interchange process.  Aldous' spectral gap conjecture, proved by
Caputo, Liggett and Richthammer \cite{CLR}, asserts that the spectral gap
of the interchange process on an arbitrary graph equals the spectral gap
of the ordinary random walk on that graph.  Cesi studied related Cayley
graphs for complete multipartite transposition sets and obtained explicit
representation-theoretic eigenvalue formulae in that case
\cite{Cesi}.  The present paper focuses on a different aspect: the full
spectrum of $\Gamma(G)$ as a possible inverse invariant of the generating
graph $G$.

A recent computational direction which should be mentioned explicitly is
the CayleyPy project of Chervov and collaborators
\cite{ChervovCube,ChervovRL,ChervovGrowth,ChervovHolography}.  That
project develops efficient computational and machine-learning tools for
Cayley and Schreier graphs, especially for path-finding, diameter, growth,
and large-scale experiments.  In particular, \cite{ChervovRL} studies
Cayley graphs of $S_n$ generated by cyclic shifts and a transposition,
proposes diameter conjectures and spectral-distribution phenomena, while
\cite{ChervovGrowth} announces the public CayleyPy library and about two
hundred conjectures, mostly concerning diameters and growth of Cayley and
Schreier graphs.  The relation with the present note is complementary:
CayleyPy supplies a powerful experimental framework for Cayley graphs,
whereas here we isolate exact algebraic moment formulae for the special
family obtained from arbitrary transposition graphs.

There is also a closely related, but different, rigidity question in
which one asks for the automorphism group or the abstract isomorphism
type of a Cayley graph generated by transpositions.  Results of Feng,
Ganesan, and, in greatest generality for arbitrary transposition sets,
Gijswijt--de Meijer show that for $n\geq 5$ these Cayley graphs are
normal and that their automorphism groups are controlled by the
automorphism groups of the underlying transposition graphs
\cite{Feng,Ganesan,GijswijtDeMeijer}.  Those results concern graph
isomorphism and automorphisms, whereas the present note asks what can be
recovered from the adjacency spectrum alone.

A complementary representation-theoretic refinement is developed in the
author's paper \cite{ShapiroN22}, to appear in the \emph{Electronic
Journal of Combinatorics}.  There the first genuinely new irreducible
component, corresponding to the partition $(n-2,2)$, is studied in detail:
an explicit edge-space model and coordinate formula are given, its trace
moments are expanded in support-subgraph counts, and weighted trace
polynomials are used for reconstruction.  The two papers address different
levels of information.  Reference~\cite{ShapiroN22} keeps the
$(n-2,2)$ block labelled, whereas the present paper asks what survives after
all irreducible blocks are merged into the ordinary regular spectrum of
$\Gamma(G)$.  This distinction will be important below.

The novelty of the paper should therefore be understood as follows.  The
representation-theoretic decomposition of the regular representation and
the special spectra of the complete and star transposition graphs are
classical or already present in the literature.  The recent CayleyPy
papers have also made Cayley graph computation, growth, diameter and
path-finding problems highly visible and have explicitly raised spectral
questions for large families of Cayley graphs
\cite{ChervovRL,ChervovGrowth}.  What seems not to have been isolated in
this form is the inverse-spectral moment viewpoint for an arbitrary
transposition graph $G$, especially the universal support expansion and
the explicit low-order consequences for reconstructing graph invariants.

Our approach is based on moments.  Let
\[
M_k(G)=\operatorname{tr}(A_G^k)
\]
where the trace is taken in the regular representation of $S_n$.  Since
only the coefficient of the identity permutation contributes to the
regular trace, $M_k(G)/n!$ counts words of length $k$ in the
edge-transpositions of $G$ whose product is the identity.  This elementary
observation leads to a universal expansion
\[
\frac{M_k(G)}{n!}=\sum_F c_k(F)N_F(G),
\]
where $F$ ranges over finite graphs with no isolated vertices, $N_F(G)$
is the number of non-induced copies of $F$ in $G$, and the coefficients
$c_k(F)$ are universal integers depending only on $F$ and $k$.

The same moments also have a representation-theoretic form.  If
$V_\lambda$ runs through the irreducible representations of $S_n$ and
$f^\lambda=\dim V_\lambda$, then
\[
M_k(G)=\sum_{\lambda\vdash n} f^\lambda
\operatorname{tr}\left(\left(\sum_{e\in E(G)}\rho_\lambda(\tau_e)\right)^k\right).
\]
These two formulae organize the same information in two different ways:
the first by the supports of words in the generating graph, and the
second by the irreducible representations of the symmetric group.

The central object of the present paper is not the individual moments
themselves but rather the embedding
\[
G \longmapsto A_G=\sum_{\{i,j\}\in E(G)}(ij)\in \mathbb{C}[S_n].
\]
This construction associates to a graph a noncommutative operator in the
group algebra of the symmetric group. The moments
\[
M_k(G)=\operatorname{tr}(A_G^k)
\]
count closed transposition words reducing to the identity permutation.
The resulting support expansions are governed by the Coxeter relations
in $S_n$, namely the cancellation relations $(ij)^2=e$ and the braid
relations
\[
(ij)(jk)(ij)=(jk)(ij)(jk).
\]
In this sense, the Cayley spectrum induces a finite-type graph invariant
theory controlled by the Coxeter structure of the symmetric group.

The explicit calculations in Section~\ref{sec:first-moments} show that
$M_2$ recovers the number of edges and that $M_4$ already detects the
Coxeter triangle relation among the three transpositions attached to a
triangle in $G$.  More precisely, the fourth moment determines the signed
local invariant $6N_{K_3}(G)-N_{P_3}(G)$.  Hence, on the important
subclass of triangle-free graphs, it recovers the number of adjacent
edge-pairs, equivalently the second moment of the degree sequence.  We
also compute $M_6$ completely and derive a cleaner specialization for
triangle-free graphs, where it determines the next local combination
$5N_{K_{1,3}}(G)+2N_{P_4}(G)+12N_{C_4}(G)$.  These formulae give concrete
evidence that the transposition Cayley spectrum sees local structure of
$G$ in a way which differs sharply from the ordinary adjacency spectrum.

\medskip
\noindent\textbf{New in this version.}
\begin{enumerate}[(a)]
\item \emph{Rank filtration and exponential formula}
(Theorem~\ref{thm:rank-bound}, Proposition~\ref{prop:exp-formula}).  If
$c_k(F)\neq0$, then $k\ge 2\operatorname{rk}(F)$, where
$\operatorname{rk}(F)=|V(F)|-c(F)$ is the rank of the cycle matroid of
$F$.  The coefficients of a disjoint union are obtained from those of
its components by binomial convolution.  Only finitely many
\emph{connected} supports contribute to each moment, namely those with
at most $k/2+1$ vertices.
\item \emph{Hurwitz numbers} (Corollary~\ref{cor:hurwitz}).  In top
degree $k=2(n-1)$, summing the coefficients $c_k(F)$ over the spanning
connected subgraphs of $K_n$ gives the genus-zero Hurwitz number
$(2n-2)!\,n^{n-3}$.
\item \emph{Token graphs and the spectral gap}
(Theorem~\ref{thm:token}, Corollary~\ref{cor:aldous}).  For two-row
shapes, $m-\operatorname{Spec}A_G^{(n-k,k)}$ is the ``new part'' of the
Laplacian spectrum of the $k$-token graph $F_k(G)$.  By the
Caputo--Liggett--Richthammer theorem, the second largest eigenvalue of
$\Gamma(G)$ equals $m-a(G)$, where $a(G)$ is the algebraic connectivity.
Hence $\operatorname{Spec}\Gamma(G)$ determines $a(G)$.
\item \emph{Weighted fourth-trace reconstruction}
(Theorem~\ref{thm:weighted-fourth}).  If the edge-transpositions are given
independent variables, the coefficient of $y_e^2y_f^2$ in the fourth
regular trace distinguishes adjacent from disjoint pairs of edges.  Thus a
single weighted fourth trace reconstructs the line graph and, for connected
graphs, the graph itself.
\item \emph{Regular graphs} (Corollary~\ref{cor:regular-graphs}).  Within
the class of regular graphs, the fourth Cayley moment determines the number
of triangles, while the sixth determines
$6N_{C_4}+11N_{K_4-e}+8N_{K_4}$.  Hence the number of $4$-cycles is
spectrally determined for triangle-free regular graphs.
\item \emph{The eighth moment} (Theorem~\ref{thm:eighth}), together with
the connected part of $c_{10}$ for trees and cycles, and conjectural
closed formulae for stars, cycles and trees
(Conjecture~\ref{conj:closed}).
\item \emph{Limits of low moments} (Proposition~\ref{prop:counterexamples}).
There are pairs of graphs on $7$ vertices with equal $M_2,M_4,M_6$ but
different numbers of triangles, different degree sequences, or
different connectivity.  There are pairs on $9$ vertices with equal
$M_2,\dots,M_8$ but different numbers of triangles and different degree
sequences.
\item \emph{Computations} (Section~\ref{sec:computations}).  The
CayleyPy group computed the refined spectra of all graphs with
$n\le 9$ and found no collisions.  We confirm this independently: for
$n\le 9$ the moments $M_2,\dots,M_{10}$ alone already separate all
graphs.
\end{enumerate}

\section{The basic operator}

The central object is the element
\[
A_G=\sum_{\{i,j\}\in E(G)} (ij)
\in \mathbb{C}[S_n].
\]

The adjacency matrix of $\Gamma(G)$ is precisely the operator
of left multiplication by $A_G$ in the regular representation
of $S_n$.

Thus the spectrum of $\Gamma(G)$ coincides with the spectrum of
$A_G$ acting in the regular representation.

Since
\[
\mathbb{C}[S_n]
\simeq
\bigoplus_{\lambda\vdash n}
(\dim V_\lambda)\,V_\lambda,
\]
one obtains
\[
\operatorname{Spec}(\Gamma(G))
=
\bigcup_{\lambda\vdash n}
\operatorname{Spec}(\rho_\lambda(A_G)),
\]
where $\rho_\lambda$ denotes the irreducible representation
corresponding to the partition $\lambda$.

This representation-theoretic decomposition suggests that the spectrum
of $\Gamma(G)$ should contain much richer information than the ordinary
adjacency spectrum of $G$.

There is also a simple global feature which is useful throughout the
paper.

\begin{proposition}[Bipartite symmetry]
\label{prop:bipartite-symmetry}
For every graph $G$, the Cayley graph $\Gamma(G)$ is bipartite.  Its two
parts are the even and odd permutations in $S_n$.  Consequently its
adjacency spectrum is symmetric about zero, and
\[
 M_{2r+1}(G)=0
 \qquad (r\geq 0).
\]
\end{proposition}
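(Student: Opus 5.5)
The plan is to use the sign character $\operatorname{sgn}\colon S_n\to\{\pm1\}$. Every edge of $\Gamma(G)$ joins a permutation $\sigma$ to $\tau\sigma$ with $\tau=(ij)\in T_G$ (left multiplication, as in the definition of $A_G$). Since $\operatorname{sgn}(\tau\sigma)=-\operatorname{sgn}(\sigma)$, each edge joins an even permutation to an odd one. Thus the partition $S_n=A_n\sqcup(S_n\setminus A_n)$ is a bipartition, and $\Gamma(G)$ is bipartite. There are no loops, since $\tau\sigma\neq\sigma$. This holds for every $G$, including the edgeless graph, where the statement is trivial.

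For spectral symmetry I will use the standard diagonal involution. Let $D$ be the operator on $\mathbb{C}[S_n]$ with $D\sigma=\operatorname{sgn}(\sigma)\sigma$. Then $D^2=I$, and $DA_GD=-A_G$, because $A_G$ maps the even span into the odd span and conversely. Hence $A_G$ is similar to $-A_G$, so the spectrum, with multiplicities, is invariant under $\lambda\mapsto-\lambda$.

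Equivalently, in the group algebra we have $\operatorname{sgn}\otimes$ twisting: the automorphism $\sigma\mapsto\operatorname{sgn}(\sigma)\sigma$ of $\mathbb{C}[S_n]$ sends $A_G$ to $-A_G$. In representation terms, $\rho_{\lambda'}\cong\rho_\lambda\otimes\operatorname{sgn}$, so $\operatorname{Spec}\rho_{\lambda'}(A_G)=-\operatorname{Spec}\rho_\lambda(A_G)$. This is an optional remark only.

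For the odd moments, one route is to take traces directly: $M_{2r+1}=\operatorname{tr}(A_G^{2r+1})=\operatorname{tr}(DA_G^{2r+1}D)=-M_{2r+1}$, so $M_{2r+1}=0$. The other route is combinatorial, via the observation in the introduction that $M_k/n!$ counts words of length $k$ in the edge-transpositions whose product is the identity. A product of an odd number of transpositions has sign $-1$, so it is never the identity, and the count vanishes.

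There is no real obstacle here. The only point needing care is that the bipartition claim uses left multiplication consistently. Right multiplication would work equally well, since sign is a homomorphism.
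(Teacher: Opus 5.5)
Your proposal is correct and follows essentially the same route as the paper: the sign character shows that each edge of $\Gamma(G)$ joins an even permutation to an odd one, and spectral symmetry and the vanishing of odd moments then follow. The paper simply cites the standard spectral symmetry of bipartite graphs and the absence of odd closed walks; your conjugation $DA_GD=-A_G$ and the trace identity spell out exactly those two facts.
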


\begin{proof}
Every generator in $T_G$ is a transposition, hence changes the parity of
a permutation.  Thus every edge of $\Gamma(G)$ joins an even permutation
to an odd permutation.  The spectral symmetry is the standard spectral
symmetry of a finite bipartite graph, and the vanishing of odd moments is
equivalent to the absence of closed walks of odd length.
\end{proof}

\section{All-order moment formulae}

The low-order calculations are only the first instances of a more
systematic structure.  There are two useful all-order formulae.  The
first is purely combinatorial and expresses every moment as a universal
linear combination of subgraph-counting functions.  The second is
representation-theoretic and expresses the same moments through the
irreducible representations of $S_n$.

\subsection{The support expansion}

A natural family of spectral invariants is given by the moments
\[
M_k(G)=\operatorname{tr}(A_G^k).
\]
Let $[1]B$ denote the coefficient of the identity permutation in an
element $B\in\mathbb C[S_n]$.  Since the trace of left multiplication
by a non-identity element of $S_n$ in the regular representation is zero,
whereas the trace of the identity is $n!$, one has
\[
M_k(G)=n!\,[1]A_G^k.
\]
Equivalently,
\[
\frac{M_k(G)}{n!}
=
\#\left\{(e_1,\ldots,e_k)\in E(G)^k:
\tau_{e_1}\cdots \tau_{e_k}=1\right\},
\]
where $\tau_{\{i,j\}}=(ij)$.  Thus the moments count words in the
edge-transpositions whose product is trivial.

Let $F$ be a finite simple graph with no isolated vertices.  Write
$E(F)$ for its edge set, and regard every edge $e=\{a,b\}$ of $F$ as the
transposition $(ab)$ in the symmetric group on $V(F)$.  For $k\geq 0$
define
\[
 c_k(F)=
 \#\left\{(e_1,\ldots,e_k)\in E(F)^k:
 \begin{array}{l}
   e_1,\ldots,e_k \text{ use every edge of }F,\\[2mm]
   \tau_{e_1}\cdots \tau_{e_k}=1
 \end{array}
 \right\}.
\]
Thus $c_k(F)$ is a universal integer depending only on the isomorphism
class of $F$.

Let $N_F(G)$ denote the number of subgraphs of $G$ which are isomorphic
to $F$, not necessarily induced.  Then the normalized moment has the
following universal expansion.

\begin{theorem}[Universal support expansion]
For every simple graph $G$ and every $k\geq 0$ one has
\[
 \frac{M_k(G)}{n!}
 =
 \sum_F c_k(F) N_F(G),
\]
where the sum is over all isomorphism classes of finite simple graphs
$F$ with no isolated vertices and with at most $k$ edges.
\end{theorem}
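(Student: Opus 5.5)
We partition the identity-words of length $k$ in $G$ according to their support.  Starting from the formula for $M_k(G)/n!$ established above, every tuple $(e_1,\ldots,e_k)\in E(G)^k$ with $\tau_{e_1}\cdots\tau_{e_k}=1$ has a \emph{support} $H=\{e_1,\ldots,e_k\}\subseteq E(G)$.  We regard $H$ as the edge-subgraph of $G$ consisting of these edges and their endpoints.  This subgraph has no isolated vertices and at most $k$ edges.  Grouping words by support gives
\[
\frac{M_k(G)}{n!}=\sum_{H}\#\{\text{identity words of length }k\text{ with support exactly }H\},
\]
where $H$ runs over all edge-subgraphs of $G$ without isolated vertices and with $|E(H)|\le k$.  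Each such $H$ is a subgraph isomorphic to a unique class $F$, and the number of subgraphs $H$ in the class $F$ is by definition $N_F(G)$.  It therefore suffices to show that the inner count equals $c_k(F)$ whenever $H\cong F$.

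For that step, fix an isomorphism $\phi\colon V(F)\to V(H)\subseteq[n]$.  It induces a bijection between edges of $F$ and edges of $H$, and hence a bijection between $k$-tuples of edges of $F$ using every edge and $k$-tuples of edges of $H$ using every edge.  We check that the product condition is preserved.  The map $\phi$ extends to an injective group homomorphism $\iota\colon \operatorname{Sym}(V(F))\to S_n$, given by conjugation-transport onto $V(H)$ and fixing $[n]\setminus V(H)$ pointwise.  It sends $\tau_e$ to $\tau_{\phi(e)}$.  Therefore $\iota(\tau_{e_1}\cdots\tau_{e_k})=\tau_{\phi(e_1)}\cdots\tau_{\phi(e_k)}$.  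Since $\iota$ is injective, one product is the identity if and only if the other is.  Hence the count of identity words with support exactly $H$ equals $c_k(F)$, which is manifestly an isomorphism invariant of $F$.

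Summing over isomorphism classes then yields the stated expansion.  Classes with more than $k$ edges may be included or excluded freely, since then $c_k(F)=0$ (a word of length $k$ cannot use more than $k$ distinct edges).  The case $k=0$ is consistent: the empty word has empty support, the empty graph $F$ has $c_0=1$, and $N_\emptyset(G)=1$.  The only point needing real care is the transport step.  One must confirm that the identity condition in $S_n$ agrees with the identity condition in the abstract group $\operatorname{Sym}(V(F))$, and the injective embedding $\iota$ settles this.  Everything else is bookkeeping of the partition by supports.
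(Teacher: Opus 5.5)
Your proposal is correct and follows essentially the same route as the paper: partition the identity words by their support $H\subseteq G$, then show that the number of words with support exactly $H\cong F$ is $c_k(F)$. Your injective embedding $\iota\colon\operatorname{Sym}(V(F))\to S_n$ spells out what the paper compresses into the remark that the transposition relations depend only on the abstract incidence pattern of $H$, and your treatment of the case $k=0$ through the empty support is a harmless addition.
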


\begin{proof}
By the coefficient-of-the-identity formula,
\[
 \frac{M_k(G)}{n!}
 =
 \#\{(e_1,\ldots,e_k)\in E(G)^k:
      \tau_{e_1}\cdots\tau_{e_k}=1\}.
\]
To each such word associate its support, namely the subgraph of $G$
spanned by the distinct edges appearing among $e_1,\ldots,e_k$ and their
endpoints.  The support has no isolated vertices and at most $k$ edges.
For a fixed subgraph $H\subseteq G$ isomorphic to $F$, the number of
identity words of length $k$ whose support is exactly $H$ is precisely
$c_k(F)$, since the transposition relations depend only on the abstract
incidence pattern of $H$.  Summing over all possible supports gives the
formula.
\end{proof}

This theorem is perhaps the cleanest general form of the moment method.
It says that the Cayley spectrum determines, for every $k$, one explicit
linear combination of small subgraph counts.  The problem of extracting
particular graph invariants from the spectrum becomes the problem of
inverting, or partially triangularizing, these universal linear
combinations.

\begin{remark}
The constants $c_k(F)$ are algorithmically computable.  For fixed $k$
there are only finitely many graphs $F$ with at most $k$ edges, and one
can enumerate all words of length $k$ in the edge-transpositions of $F$.
This gives a finite, completely explicit procedure for producing the
moment formula of any prescribed order.
\end{remark}

\begin{remark}
The support expansion gives another way to see the parity phenomenon in
Proposition~\ref{prop:bipartite-symmetry}: $c_k(F)=0$ for every odd
$k$.  The first nontrivial moments are therefore $M_2,M_4,M_6,\ldots$.
\end{remark}

\subsection{The rank filtration}

For a graph $F$ write $c(F)$ for its number of connected components and
\[
 \operatorname{rk}(F)=|V(F)|-c(F)
\]
for the rank of its cycle matroid (the size of a spanning forest).

\begin{theorem}[Hurwitz-type rank bound]
\label{thm:rank-bound}
If $c_k(F)\neq 0$, then
\[
 2\operatorname{rk}(F)\le k
 \qquad\text{and}\qquad |E(F)|\le k .
\]
In particular a connected support of an identity word of length $k$ has
at most $k/2+1$ vertices.  Consequently
\[
 \frac{M_{2r}(G)}{n!}=
 \sum_{\substack{F:\ \operatorname{rk}(F)\le r\\ |E(F)|\le 2r}}
 c_{2r}(F)\,N_F(G).
\]
\end{theorem}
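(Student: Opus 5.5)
The plan is to track how far a product of transpositions is from the identity using the cycle-count function. For $\sigma\in S_V$ put $\ell(\sigma)=|V|-\operatorname{cyc}(\sigma)$. This is the transposition length: the minimal number of transpositions whose product is $\sigma$. The only property we need is the classical one that multiplying by a transposition changes the number of cycles by exactly $\pm1$. If the two moved points lie in the same cycle, that cycle splits; if they lie in different cycles, those cycles merge.

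Fix a word $(e_1,\dots,e_k)$ counted by $c_k(F)$. For $0\le t\le k$ consider the partial products $\sigma_t=\tau_{e_1}\cdots\tau_{e_t}$. We have $\sigma_0=\sigma_k=1$. Let $\pi_t$ be the partition of $V(F)$ into connected components of the graph $F_t$ with vertex set $V(F)$ and edge set $\{e_1,\dots,e_t\}$. Each cycle of $\sigma_t$ is contained in a block of $\pi_t$, since every generator used so far moves points only within these blocks.

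Call step $t$ a \emph{merge} if $e_t$ joins two different components of $F_{t-1}$. At a merge step, the endpoints of $e_t$ lie in different blocks of $\pi_{t-1}$, hence in different cycles of $\sigma_{t-1}$. So multiplication by $\tau_{e_t}$ merges two cycles and $\ell$ increases by exactly one. Because the word uses every edge of $F$, the final graph $F_k$ equals $F$ (with the vertex set $V(F)$). The number of component-merge steps is therefore
\[
|V(F)|-c(F)=\operatorname{rk}(F).
\]
At every other step $\ell$ changes by $\pm1$. Since $\ell(\sigma_0)=\ell(\sigma_k)=0$, the number of $+1$ steps equals the number of $-1$ steps. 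There are at least $\operatorname{rk}(F)$ steps of type $+1$, so there are also at least $\operatorname{rk}(F)$ steps of type $-1$, and the two sets are disjoint. Hence
\[
k\ge 2\operatorname{rk}(F).
\]
The bound $|E(F)|\le k$ is immediate, since every edge of $F$ occurs among the $k$ letters.

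For a connected $F$ we have $c(F)=1$, so $|V(F)|-1\le k/2$. The displayed formula for $M_{2r}$ then follows by dropping the zero terms from the Universal support expansion theorem.

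The step needing the most care is the claim that a component-merging edge always merges two cycles of the current partial product. This rests on the invariant that every cycle of $\sigma_{t-1}$ lies inside a single component of $F_{t-1}$. I would verify it by a one-line induction on $t$: $\sigma_t$ is obtained from $\sigma_{t-1}$ by a transposition whose two points lie in one component of $F_t$, and the components of $F_{t-1}$ refine those of $F_t$. Everything else is parity bookkeeping.
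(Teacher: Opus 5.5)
Your proof is correct and follows the paper's argument: you track the cycle count of the partial products, note that each of the $\operatorname{rk}(F)$ component-merging steps must merge two cycles because every cycle of the current product lies inside a component of $F_{t-1}$, and then balance increasing against decreasing steps using $\sigma_0=\sigma_k=1$. The only differences are cosmetic: you multiply on the right and track $\ell=|V|-\operatorname{cyc}$ instead of $\operatorname{cyc}$, and neither change affects the argument.
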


\begin{proof}
The inequality $|E(F)|\le k$ is clear, since a word of length $k$ uses
at most $k$ distinct letters.  Let $(e_1,\dots,e_k)$ be a word counted
by $c_k(F)$.  Put $\sigma_0=1$ and $\sigma_j=\tau_{e_j}\sigma_{j-1}$, and
let $\operatorname{cyc}(\sigma)$ be the number of cycles of $\sigma$ on
$V(F)$, fixed points included.  Multiplying by a transposition $(ab)$
changes the number of cycles by $\pm1$.  It decreases by one exactly
when $a$ and $b$ lie in different cycles of the permutation being
multiplied.

Let $F_j$ be the graph on $V(F)$ with edges $e_1,\dots,e_j$.  The cycles
of $\sigma_{j-1}$ are contained in the orbits of the group generated by
$\tau_{e_1},\dots,\tau_{e_{j-1}}$, and these orbits are the connected
components of $F_{j-1}$.  Hence, whenever $e_j$ joins two different
components of $F_{j-1}$, the number of cycles drops by one at step $j$.
Passing from the edgeless graph $F_0$ to $F_k=F$ requires exactly
$|V(F)|-c(F)=\operatorname{rk}(F)$ such merging steps.  So at least
$\operatorname{rk}(F)$ steps decrease the number of cycles.  Since
$\sigma_k=\sigma_0=1$, the numbers of decreasing and increasing steps
are equal.  Therefore $k\ge 2\operatorname{rk}(F)$.
\end{proof}

\begin{remark}
For a connected support $F$ on $v$ vertices the bound reads
$k\ge 2(v-1)$.  By the Riemann--Hurwitz formula, an identity word of
length $k$ generating a transitive subgroup corresponds to a branched
covering of the sphere by a surface of genus $g$ with
$k=2(v-1)+2g$.  The words with $k=2(v-1)$ are the \emph{minimal
transitive factorizations} of the identity.  Their total number in $S_n$
is a classical Hurwitz number; see Corollary~\ref{cor:hurwitz}.  Thus
the support expansion is naturally graded by genus, and the support
graphs $F$ refine the Hurwitz count according to which transpositions
are used.
\end{remark}

\begin{proposition}[Exponential formula]
\label{prop:exp-formula}
For a graph $F$ put
\[
 C_F(x)=\sum_{k\ge0}c_k(F)\frac{x^k}{k!}.
\]
If $F=F_1\sqcup F_2$ is a vertex-disjoint union, then
$C_F(x)=C_{F_1}(x)\,C_{F_2}(x)$, that is,
\[
 c_k(F_1\sqcup F_2)=\sum_{j=0}^k\binom kj c_j(F_1)\,c_{k-j}(F_2).
\]
Moreover
\[
 \sum_{k\ge0}\frac{M_k(G)}{n!}\frac{x^k}{k!}
 =[1]\,e^{xA_G}
 =\sum_{F}C_F(x)\,N_F(G),
\]
where $F$ runs over all graphs without isolated vertices, the empty
graph included, with $C_\emptyset=1$.
\end{proposition}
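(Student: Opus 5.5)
The plan is to prove the convolution identity by a direct shuffle argument on words, and then obtain the generating-function statement from the support expansion.

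\emph{Step 1: product formula.} Let $F=F_1\sqcup F_2$ with $V(F_1)\cap V(F_2)=\emptyset$. A word $(e_1,\dots,e_k)\in E(F)^k$ is determined by three pieces of data:
\begin{itemize}
\item the set $J\subseteq\{1,\dots,k\}$ of positions carrying edges of $F_1$, with $|J|=j$;
\item the subword $w_1\in E(F_1)^j$ read off at the positions in $J$;
\item the subword $w_2\in E(F_2)^{k-j}$ read off at the complementary positions.
\end{itemize}
The transpositions attached to $F_1$ and to $F_2$ act on disjoint sets of points, so they commute. Hence the product of the whole word equals $\pi(w_1)\pi(w_2)$, where $\pi(w_1)\in S_{V(F_1)}$ and $\pi(w_2)\in S_{V(F_2)}$. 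Because these two symmetric groups meet only in the identity, the product is $1$ exactly when $\pi(w_1)=1$ and $\pi(w_2)=1$.

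The word uses every edge of $F$ exactly when $w_1$ uses every edge of $F_1$ and $w_2$ uses every edge of $F_2$. The correspondence between words and triples $(J,w_1,w_2)$ is a bijection. Counting triples, with $\binom kj$ choices for $J$, gives
\[
c_k(F)=\sum_{j=0}^{k}\binom kj c_j(F_1)\,c_{k-j}(F_2).
\]
This identity is exactly equivalent to $C_F=C_{F_1}C_{F_2}$ as exponential generating functions.

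\emph{Step 2: generating function of moments.} From $M_k(G)=n![1]A_G^k$ we get, termwise in the finite-dimensional algebra $\mathbb C[S_n]$,
\[
\sum_k \frac{M_k(G)}{n!}\frac{x^k}{k!}=[1]\,e^{xA_G}.
\]
The right-hand side is an entire function, since $e^{xA_G}$ converges. Multiplying the universal support expansion (the Theorem above) by $x^k/k!$ and summing over $k$ then gives
\[
\sum_k \frac{M_k(G)}{n!}\frac{x^k}{k!}=\sum_F C_F(x)\,N_F(G).
\]
Two points need checking here.
\begin{itemize}
\item The sum over $F$ is effectively finite: $N_F(G)=0$ unless $F$ embeds in $G$, and only finitely many graphs without isolated vertices do. So exchanging the sums over $k$ and $F$ is harmless.
\item The $k=0$ term: the empty word has empty support, it contributes $1$, and it is accounted for by $c_0(\emptyset)=1$, $c_k(\emptyset)=0$ for $k>0$, and $N_\emptyset(G)=1$. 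This matches the convention $C_\emptyset=1$, and in Step 1 it makes $\emptyset$ the unit for disjoint union.
\end{itemize}

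\emph{Main point of care.} The only genuinely delicate step is the bijection in Step 1, specifically the claim that the product is the identity if and only if both subproducts are. This relies on two facts: the supports are vertex-disjoint, so the transpositions commute, and the two factors lie in subgroups $S_{V(F_1)}$ and $S_{V(F_2)}$ that intersect trivially. Both are immediate once stated.
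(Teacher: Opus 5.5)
Your proposal is correct and follows essentially the same route as the paper: decompose a word as a shuffle with $\binom kj$ interleavings, use that the transpositions of $F_1$ and $F_2$ commute and that $S_{V(F_1)}\cap S_{V(F_2)}=\{1\}$ to split the identity condition, note that edge usage splits componentwise, and then read off the generating-function identity from the support expansion. Your extra checks are details the paper leaves implicit: the finiteness of the sum over $F$, and the convention $c_0(\emptyset)=1$, $N_\emptyset(G)=1$, which gives $C_\emptyset=1$.
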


\begin{proof}
A word in $E(F_1)\sqcup E(F_2)$ is a shuffle of a word in $E(F_1)$ and a
word in $E(F_2)$, and there are $\binom kj$ ways to interleave words of
lengths $j$ and $k-j$.  The transpositions of $F_1$ commute with those
of $F_2$, and the corresponding subgroups intersect trivially.  So the
product is the identity if and only if both subwords have identity
product.  A word uses every edge of $F$ if and only if each subword uses
every edge of its own component.  The second statement is the
exponential generating function form of the support expansion.
\end{proof}

By Theorem~\ref{thm:rank-bound} and Proposition~\ref{prop:exp-formula},
the moment $M_k$ is determined by finitely many numbers $c_j(F)$, $j\le
k$, for \emph{connected} graphs $F$ with at most $k/2+1$ vertices.  This
makes computing higher moments very cheap: $M_8$ needs only the
connected graphs on at most five vertices, and $M_{10}$ only those on at
most six vertices.

\begin{corollary}[Genus-zero Hurwitz numbers]
\label{cor:hurwitz}
For every $n\ge 2$,
\[
 \sum_{\substack{F\subseteq K_n\\ F\ \text{spanning, connected}}}
 c_{2n-2}(F)=(2n-2)!\,n^{n-3},
\]
where the sum is over labelled spanning connected subgraphs of $K_n$.
Equivalently,
\[
\sum_{[F]}c_{2n-2}(F)\,\frac{n!}{|\operatorname{Aut}F|}=(2n-2)!\,n^{n-3},
\]
the sum running over isomorphism classes of connected graphs on $n$
vertices.
\end{corollary}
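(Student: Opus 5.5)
The plan is to identify the left-hand side with the number of minimal transitive factorizations of the identity in $S_n$ into transpositions, and then invoke the classical genus-zero Hurwitz count. Take $G=K_n$, so that $T_G$ is the set of all transpositions. By definition, $c_{2n-2}(F)$ for a labelled spanning subgraph $F\subseteq K_n$ counts words $(e_1,\dots,e_{2n-2})$ of transpositions whose product is $1$ and whose set of distinct letters is exactly $E(F)$. Every word of transpositions in $S_n$ has a unique support, so summing over all labelled $F\subseteq K_n$ partitions the set of all such words. Restricting to spanning connected $F$ selects exactly those words whose transpositions generate a transitive subgroup of $S_n$, because the orbits of $\langle \tau_{e_1},\dots,\tau_{e_k}\rangle$ are the connected components of the support graph, as used in the proof of Theorem~\ref{thm:rank-bound}. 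Hence the left side equals
\[
H_n:=\#\{(t_1,\dots,t_{2n-2})\ \text{transpositions}:\ t_1\cdots t_{2n-2}=1,\ \langle t_i\rangle\ \text{transitive}\}.
\]
By Theorem~\ref{thm:rank-bound}, $2n-2$ is the minimal length for such a transitive word, so these are precisely the minimal transitive factorizations.

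The second step is to evaluate $H_n=(2n-2)!\,n^{n-3}$. This is the classical genus-zero Hurwitz formula for the trivial cycle type. The Riemann--Hurwitz remark says these words correspond to genus $0$ branched covers with $2n-2$ simple branch points and full monodromy $1$ at infinity. In Hurwitz's formula $H_{0,\mu}=\frac{(n+\ell-2)!}{|\mathrm{Aut}\,\mu|}\,n^{\ell-3}\prod_i \frac{\mu_i^{\mu_i}}{\mu_i!}$, as proved by Goulden--Jackson, one should count factorizations of a fixed permutation of type $\mu$, with the normalization adjusted by the size of the class. Specializing to $\mu=(1^n)$ and $\ell=n$ gives $(2n-2)!\,n^{n-3}$ for the fixed permutation $1$.

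The main care point is this normalization: the Goulden--Jackson count for $\mu=1^n$ as a class sum with $|\mathrm{Aut}\,\mu|=n!$ must be matched with our count of factorizations of the single element $1$. I would cross-check with small cases. For $n=2$ the only word is $((12),(12))$, giving $1=2!\cdot 2^{-1}$. For $n=3$ the formula predicts $4!\cdot 1=24$. Directly, a product of four transpositions is $1$ with full transitivity iff $t_1t_2=t_4t_3$ and the letters are not all equal. Counting ordered pairs with $t_1t_2$ a $3$-cycle gives $6$ pairs, each matched by $3$ choices of $(t_3,t_4)$, for $18$. Pairs with $t_1=t_2$ and $t_3=t_4\neq t_1$ give $6$ more. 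The total is $24$, which confirms the normalization.

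If a self-contained argument is preferred, an alternative is to derive the result from the known count $(n-1)!\cdot$ [minimal factorizations of an $n$-cycle] $=n^{n-2}$ by a cut-and-join step. Take the last transposition $t_{2n-2}=(ab)$; the first $2n-3$ letters then multiply to $(ab)$. By genus zero and transitivity, removing $(ab)$ from the tree-like structure splits the factorization into two minimal transitive factorizations on complementary vertex sets that are then joined. That would yield a recursion solvable by Cayley/Abel-type identities. I would only pursue this route if a citation is unacceptable, since the bijective bookkeeping there is the genuinely hard part.

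The final step is the restatement over isomorphism classes. The quantity $c_{2n-2}(F)$ is an isomorphism invariant, and each isomorphism class $[F]$ of connected graphs on $n$ vertices has $n!/|\operatorname{Aut}F|$ labelled copies in $K_n$. Substituting this count into the labelled sum gives the second displayed formula.
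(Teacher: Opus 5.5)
Your proposal is correct and follows the paper's proof. Both identify the labelled sum with the number of minimal transitive factorizations of the identity (transitivity $\Leftrightarrow$ spanning connected support, minimality from Theorem~\ref{thm:rank-bound}) and specialize Hurwitz's formula, as proved by Goulden--Jackson, to $\alpha=1^n$. The paper quotes the fixed-permutation form $(n+\ell-2)!\,n^{\ell-3}\prod_i\alpha_i^{\alpha_i}/(\alpha_i-1)!$ directly, which removes your normalization concern: the conversion from your class-normalized $H_{0,\mu}$ is multiplication by $z_{1^n}=n!$. So the $n=2,3$ checks and the cut-and-join fallback are not needed.
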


\begin{proof}
The left-hand side counts words of length $2n-2$ in the transpositions
of $S_n$ whose product is $1$ and which generate a transitive subgroup,
i.e.\ minimal transitive factorizations of the identity.  By Hurwitz's
formula (proved rigorously by Goulden and Jackson
\cite{Hurwitz,GouldenJackson97}), the number of minimal transitive
factorizations of a permutation of cycle type
$\alpha=(\alpha_1,\dots,\alpha_\ell)\vdash n$ is
$(n+\ell-2)!\,n^{\ell-3}\prod_i\alpha_i^{\alpha_i}/(\alpha_i-1)!$.
For $\alpha=1^n$ this equals $(2n-2)!\,n^{n-3}$.
\end{proof}

We used Corollary~\ref{cor:hurwitz} as an independent check of the
coefficient tables in Section~\ref{sec:first-moments}.  For instance,
for $n=4$ it reads
\[
4\cdot 30+12\cdot 42+12\cdot 96+3\cdot 72+6\cdot 132+1\cdot 96=2880=6!\cdot 4,
\]
where the terms correspond to $K_{1,3},P_4,\operatorname{Paw},C_4,K_4-e,K_4$.
We checked the identity by computer for all $n\le 6$.

\subsection{Representation-theoretic form}

The preceding formula is combinatorial.  There is also an all-order
formula directly in the representation theory of the symmetric group.
Let $f^\lambda=\dim V_\lambda$, and let
\[
 A_G^\lambda=\rho_\lambda(A_G)
 =\sum_{\{i,j\}\in E(G)}\rho_\lambda((ij))
 \in \operatorname{End}(V_\lambda).
\]
Since the regular representation decomposes as
\[
 \mathbb C[S_n]\simeq \bigoplus_{\lambda\vdash n}
       (f^\lambda) V_\lambda,
\]
one obtains, for every $k\geq 0$,
\[
 M_k(G)=
 \sum_{\lambda\vdash n} f^\lambda
 \operatorname{tr}\left((A_G^\lambda)^k\right).
\]
Equivalently,
\[
 [1]A_G^k
 =
 \frac1{n!}
 \sum_{\lambda\vdash n} f^\lambda
 \operatorname{tr}\left((A_G^\lambda)^k\right).
\]
This is the representation-theoretic all-order moment formula.

It is important, however, that $A_G$ is not central unless $G$ is the
complete graph or the empty graph.  Therefore the formula cannot usually
be reduced to ordinary irreducible characters alone.  One needs the full
matrices of the transpositions in irreducible representations.

A particularly concrete version is obtained by using the Young
orthogonal form.  If $\operatorname{SYT}(\lambda)$ is the set of standard
Young tableaux of shape $\lambda$, then
\[
\begin{aligned}
 &\operatorname{tr}\left((A_G^\lambda)^k\right) \\
 &\quad =
 \sum_{T_0,T_1,\ldots,T_{k-1}\in \operatorname{SYT}(\lambda)}
 \sum_{e_1,\ldots,e_k\in E(G)}
 \rho_\lambda(\tau_{e_1})_{T_0,T_1}
 \rho_\lambda(\tau_{e_2})_{T_1,T_2}
 \cdots
 \rho_\lambda(\tau_{e_k})_{T_{k-1},T_0}.
\end{aligned}
\]
Thus the moments can be computed by summing closed walks in the basis of
standard tableaux, weighted by matrix coefficients of transpositions.
This gives a direct analogue of moment formulae for ordinary graph
spectra, but now with the vertices of the auxiliary walk replaced by
standard Young tableaux.

\begin{proposition}[Two all-order descriptions]
For every $k$, the same spectral invariant $M_k(G)$ admits both
expressions
\[
 \frac{M_k(G)}{n!}=\sum_F c_k(F)N_F(G)
\]
and
\[
 M_k(G)=
 \sum_{\lambda\vdash n} f^\lambda
 \operatorname{tr}\left(
 \left(\sum_{e\in E(G)}\rho_\lambda(\tau_e)\right)^k
 \right).
\]
The first expression organizes the answer by subgraph supports in $G$;
the second organizes it by irreducible representations of $S_n$.
\end{proposition}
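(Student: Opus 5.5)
The statement simply puts side by side two descriptions of the single number $M_k(G)=\operatorname{tr}(A_G^k)$, where the trace is taken in the regular representation. The plan is to derive each expression separately from this definition, quoting results already established.

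\emph{First expression.} This is exactly the Universal support expansion theorem proved above. Recall the chain of reasoning:
\begin{itemize}
\item left multiplication by a non-identity permutation has no fixed points in the basis $S_n$ of $\mathbb C[S_n]$, so its regular trace is $0$, while the identity has trace $n!$;
\item hence $M_k(G)=n!\,[1]A_G^k$;
\item expanding $A_G^k$ multilinearly, $[1]A_G^k$ counts words $(e_1,\dots,e_k)\in E(G)^k$ with $\tau_{e_1}\cdots\tau_{e_k}=1$;
\item partition these words by their support subgraph $H\subseteq G$. Relabelling vertices along an isomorphism $H\cong F$ shows that the number of identity words with support exactly $H$ is $c_k(F)$.
\end{itemize}
Summing over supports gives $\sum_F c_k(F)N_F(G)$. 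Here I will only cite the theorem, perhaps noting that by Theorem~\ref{thm:rank-bound} the sum is finite and restricted to $\operatorname{rk}(F)\le k/2$.

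\emph{Second expression.} Use the decomposition $\mathbb C[S_n]\simeq\bigoplus_\lambda f^\lambda V_\lambda$ as left $\mathbb C[S_n]$-modules. Left multiplication by $A_G$ is a module endomorphism of the form $\rho(A_G)$ for the regular representation $\rho$. Under this isomorphism it is therefore conjugate to $\bigoplus_\lambda (\rho_\lambda(A_G))^{\oplus f^\lambda}$. Traces are additive over direct sums and invariant under conjugation, and $\rho_\lambda$ is an algebra homomorphism, so $\rho_\lambda(A_G)^k=\rho_\lambda(A_G^k)$ with $\rho_\lambda(A_G)=\sum_{e\in E(G)}\rho_\lambda(\tau_e)$. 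Putting these together gives
\[
M_k(G)=\sum_{\lambda\vdash n} f^\lambda\operatorname{tr}\Bigl(\Bigl(\sum_{e\in E(G)}\rho_\lambda(\tau_e)\Bigr)^k\Bigr).
\]

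There is no real obstacle here. The only point needing care is that the regular-representation decomposition must be applied to the \emph{left} action, and that the multiplicity of each $V_\lambda$ is $f^\lambda$. This follows from Wedderburn, or equivalently from the character identity $\chi_{\mathrm{reg}}=\sum_\lambda f^\lambda\chi_\lambda$. One can also check consistency via the character identity applied to $A_G^k$: since $\chi_{\mathrm{reg}}(g)=n!\,\delta_{g,1}$, it gives back $n!\,[1]A_G^k$ directly, tying the two expressions together.
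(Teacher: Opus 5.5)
Your proposal is correct and follows the paper's route. The paper obtains the first expression as the Universal support expansion theorem (via $M_k(G)=n!\,[1]A_G^k$ and sorting identity words by their support), and the second directly from $\mathbb C[S_n]\simeq\bigoplus_{\lambda}f^\lambda V_\lambda$ together with additivity of the trace, so the proposition is just these two derivations placed side by side, as in your argument.
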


\subsection{The standard component and the Laplacian of $G$}

There is one irreducible component of the preceding representation-
theoretic formula which has an especially transparent graph-theoretic
meaning.  Let $W=\mathbb C^n$ be the permutation representation of
$S_n$, and let $\mathbf 1\subset W$ be the one-dimensional trivial
subrepresentation.  Its complement
\[
 W_0=\{x=(x_1,\ldots,x_n): x_1+\cdots+x_n=0\}
\]
is the standard irreducible representation of $S_n$, corresponding to
the partition $(n-1,1)$.

Let $L_G$ be the usual graph Laplacian of $G$, and let
$m=|E(G)|$.  On $W$, the transposition $(ij)$ acts by interchanging the
coordinates $x_i$ and $x_j$.  Hence
\[
 (ij)=I-(e_i-e_j)(e_i-e_j)^t
\]
with respect to the standard coordinate basis.  Summing over the edges of
$G$ gives the following elementary but useful identity.

\begin{proposition}[The standard component]
\label{prop:standard-laplacian}
On the permutation representation $W=\mathbb C^n$ one has
\[
 \sum_{\{i,j\}\in E(G)}(ij)=mI-L_G.
\]
Consequently, on the standard representation $W_0$ the operator
$A_G^{(n-1,1)}$ has eigenvalues
\[
 m-\lambda_2,\,m-\lambda_3,\ldots,\,m-\lambda_n,
\]
where
\[
 0=\lambda_1\leq \lambda_2\leq\cdots\leq\lambda_n
\]
are the Laplacian eigenvalues of $G$.
\end{proposition}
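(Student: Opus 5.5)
The plan is to verify the rank-one identity for a single transposition, sum it over the edges of $G$, and then restrict to the invariant complement of the trivial representation.

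\emph{Step 1: one transposition.} Let $P_{ij}$ be the permutation matrix of $(ij)$ on $W$. The vector $u=e_i-e_j$ satisfies $P_{ij}u=-u$. Every vector orthogonal to $u$ has equal $i$- and $j$-coordinates, so $P_{ij}$ fixes it. Hence $P_{ij}$ is the reflection in the hyperplane $u^\perp$. Since $\|u\|^2=2$, this reflection equals $I-\tfrac{2}{2}uu^t=I-(e_i-e_j)(e_i-e_j)^t$. Alternatively, one can compare the four entries in rows and columns $i,j$ directly.

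\emph{Step 2: sum over edges.} Summing over the $m$ edges gives
\[
\sum_{\{i,j\}\in E(G)}(ij)=mI-\sum_{\{i,j\}\in E(G)}(e_i-e_j)(e_i-e_j)^t .
\]
The second sum is the standard edge decomposition of the Laplacian $L_G=D-A$. The edge $\{i,j\}$ contributes $+1$ to the diagonal entries $(i,i)$ and $(j,j)$ and $-1$ to the off-diagonal entries $(i,j)$ and $(j,i)$. This proves the first claim.

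\emph{Step 3: restrict to $W_0$.} The all-ones vector $\mathbf 1$ spans the trivial subrepresentation and satisfies $L_G\mathbf 1=0$, so it is an eigenvector of $mI-L_G$ with eigenvalue $m$. Because $L_G$ is real symmetric, the orthogonal complement $W_0=\mathbf 1^\perp$ is invariant under it. We then take an orthonormal eigenbasis of $L_G$ containing $\mathbf 1/\sqrt n$ for $\lambda_1=0$. This is possible even when $0$ is a repeated eigenvalue, since we only need $\mathbf 1$ to be one of the basis vectors. The remaining basis vectors lie in $W_0$ and have eigenvalues $\lambda_2,\dots,\lambda_n$. Consequently $A_G^{(n-1,1)}=(mI-L_G)|_{W_0}$ has eigenvalues $m-\lambda_2,\dots,m-\lambda_n$, counted with multiplicity.

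The only point needing care is this multiplicity bookkeeping when $G$ is disconnected: removing the single eigenvector $\mathbf 1$ removes exactly one copy of the eigenvalue $0$. Everything else is routine.
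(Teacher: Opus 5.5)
Your proposal is correct and follows essentially the same route as the paper: you write $(ij)$ as the reflection $I-(e_i-e_j)(e_i-e_j)^t$, sum over the edges to get $mI-L_G$, and restrict to $W_0=\mathbf 1^\perp$. Your remark on multiplicities, choosing an orthonormal eigenbasis that contains $\mathbf 1/\sqrt n$, shows that exactly one copy of $0$ is removed even when $G$ is disconnected; the paper leaves this point implicit.
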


\begin{proof}
For a single edge $\{i,j\}$, the transposition $(ij)$ fixes the
orthogonal complement of the line spanned by $e_i-e_j$ and sends
$e_i-e_j$ to its negative.  Therefore, as an operator on $W$,
\[
 (ij)=I-(e_i-e_j)(e_i-e_j)^t.
\]
Summing this identity over all edges gives
\[
 A_G=mI-\sum_{\{i,j\}\in E(G)}(e_i-e_j)(e_i-e_j)^t=mI-L_G.
\]
The vector $(1,\ldots,1)$ is an eigenvector with eigenvalue $m$, and the
restriction to its complement is exactly the standard representation.
This proves the assertion.
\end{proof}

\begin{remark}
Proposition~\ref{prop:standard-laplacian} should be interpreted with a
minor caveat.  The full Cayley spectrum records the union of the spectra
of all irreducible components, with multiplicities, but it does not label
which eigenvalues come from which component.  Thus the proposition shows
that the \emph{refined} irreducible spectrum contains the Laplacian
spectrum of $G$.  It also explains why the transposition Cayley spectrum
is naturally expected to be stronger than the ordinary adjacency or
Laplacian spectrum: the latter appears already in the single component
$(n-1,1)$, while all other irreducible representations provide additional
constraints.
\end{remark}

\subsection{Two-row components and token graphs}

Proposition~\ref{prop:standard-laplacian} is the case $k=1$ of a general
statement about permutation modules.  For $0\le k\le n$, the
\emph{$k$-token graph} $F_k(G)$ has as vertices the $k$-element subsets
of $[n]$.  Two subsets $S,S'$ are adjacent if $S\triangle S'=\{i,j\}$
with $\{i,j\}\in E(G)$ \cite{FabilaMonroy,DalfoEtAl}.  Let
$M^{(n-k,k)}=\mathbb C\binom{[n]}{k}$ be the corresponding permutation
module.  For $k\le n/2$, Young's rule gives
\[
 M^{(n-k,k)}\simeq\bigoplus_{j=0}^{k}V_{(n-j,j)} .
\]

\begin{theorem}[Token graphs]
\label{thm:token}
On $M^{(n-k,k)}$ one has
\[
 mI-\sum_{\{i,j\}\in E(G)}(ij)=L_{F_k(G)},
\]
the Laplacian of the $k$-token graph.  Consequently, for $1\le k\le n/2$,
the multiset $m-\operatorname{Spec}A_G^{(n-k,k)}$ is the multiset
difference
\[
 \operatorname{LSpec}F_k(G)\setminus\operatorname{LSpec}F_{k-1}(G),
\]
where $\operatorname{LSpec}$ denotes the Laplacian spectrum.
\end{theorem}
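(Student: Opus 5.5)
Two things need proving: an operator identity on $M^{(n-k,k)}$, which is a direct computation, and a spectral consequence obtained by peeling off the lower two-row components via Young's rule.

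\emph{The operator identity.} Work in the basis $\{e_S : S\in\binom{[n]}{k}\}$ of $M^{(n-k,k)}$, with $(ij)e_S=e_{(ij)S}$. For a fixed edge $\{i,j\}$ there are two cases.
\begin{itemize}
\item If $|S\cap\{i,j\}|\in\{0,2\}$, then $(ij)S=S$, so $(I-(ij))e_S=0$.
\item If $|S\cap\{i,j\}|=1$, then $(ij)S=S'$ with $S\triangle S'=\{i,j\}$. The operator $I-(ij)$ acts on $\operatorname{span}(e_S,e_{S'})$ as the $2\times2$ matrix $\begin{pmatrix}1&-1\\-1&1\end{pmatrix}$, which is the Laplacian of the single token-graph edge $SS'$.
\end{itemize}
So $I-(ij)$ is exactly the Laplacian of the subgraph of $F_k(G)$ consisting of the edges produced by the transposition $(ij)$. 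Every edge of $F_k(G)$ arises from exactly one edge $\{i,j\}$ of $G$, namely $\{i,j\}=S\triangle S'$. Summing over $E(G)$ therefore gives $\sum_{\{i,j\}\in E(G)}(I-(ij))=L_{F_k(G)}$, which is the identity $mI-\sum(ij)=L_{F_k(G)}$. This is the same computation as in Proposition~\ref{prop:standard-laplacian}, which is the case $k=1$.

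\emph{The spectral consequence.} Since $A_G$ lies in $\mathbb C[S_n]$, it acts on any representation compatibly with every isotypic decomposition. For $k\le n/2$, Young's rule gives $M^{(n-k,k)}\simeq\bigoplus_{j=0}^{k}V_{(n-j,j)}$, each summand occurring once. Hence the spectrum of $A_G$ on $M^{(n-k,k)}$ is the multiset union
\[
\bigcup_{j=0}^{k}\operatorname{Spec}A_G^{(n-j,j)}.
\]
Applying the identity, $\operatorname{LSpec}F_k(G)=m-\bigcup_{j=0}^k\operatorname{Spec}A_G^{(n-j,j)}$. Since $k-1<k\le n/2$, Young's rule also applies to $M^{(n-k+1,k-1)}$, so the same formula holds with $k$ replaced by $k-1$. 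Taking the multiset difference of the two formulas isolates the $j=k$ block, which is the claim. The base case $k=1$ is consistent: $F_0(G)$ is a single vertex with Laplacian spectrum $\{0\}$, matching the trivial eigenvalue $m$ on $V_{(n)}$.

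\emph{Main obstacle.} The only real subtlety is justifying that the spectrum of $A_G$ on a direct sum of irreducibles, each appearing once, is the union of the block spectra. To handle this I would:
\begin{itemize}
\item fix an $S_n$-equivariant isomorphism realizing Young's rule;
\item note that $A_G$ commutes with it, since $A_G$ acts through the group algebra, so it preserves each summand;
\item check that the resulting identification of $A_G^{(n-j,j)}$ is independent of the chosen model of $V_{(n-j,j)}$, up to conjugation.
\end{itemize}
Multiplicity one keeps this clean. The condition $k\le n/2$ is needed exactly so that Young's rule has the stated form for both $k$ and $k-1$.
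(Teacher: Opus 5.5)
Your proposal is correct and follows essentially the same route as the paper. The paper writes $A_G$ on $M^{(n-k,k)}$ as $A(F_k(G))+D$, with $D$ diagonal with entries $m-\deg_{F_k(G)}(S)$; this is the same computation as your decomposition of each $I-(ij)$ into the Laplacian of a matching in $F_k(G)$. It then obtains the spectral statement from Young's rule applied to $M^{(n-k,k)}$ and $M^{(n-k+1,k-1)}$, exactly as your multiset difference does.
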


\begin{proof}
Let $S$ be a $k$-subset and $e=\{i,j\}\in E(G)$.  If $e\subseteq S$ or
$e\cap S=\emptyset$, then $(ij)S=S$.  Otherwise
$(ij)S=S\triangle\{i,j\}$, which is a neighbour of $S$ in $F_k(G)$, and
every neighbour arises from exactly one edge.  Hence $A_G$ acts on
$M^{(n-k,k)}$ as $A(F_k(G))+D$, where $D$ is diagonal with entry
$m-\deg_{F_k(G)}(S)$ at $S$.  Thus $mI-A_G=L_{F_k(G)}$.  The second
statement follows by induction on $k$ from Young's rule.
\end{proof}

The case $k=1$ is Proposition~\ref{prop:standard-laplacian}.  The
inclusion $\operatorname{LSpec}(G)\subseteq\operatorname{LSpec}F_k(G)$
proved in \cite{DalfoEtAl} is an immediate consequence of
Theorem~\ref{thm:token}, since $V_{(n-1,1)}\subseteq M^{(n-k,k)}$.  For
$\mu=(1^n)$ one recovers $\Gamma(G)$ itself, which is the $n$-coloured
token graph.  For a general composition $\mu$, the Laplacian of the
Schreier graph of $S_n/S_\mu$ with respect to $T_G$ is $mI-A_G$ acting
on $M^\mu$.

For $k=2$, Theorem~\ref{thm:token} isolates the first irreducible
correction to the ordinary Laplacian spectrum:
\[
 m-\operatorname{Spec}A_G^{(n-2,2)}
 =\operatorname{LSpec}F_2(G)\setminus\operatorname{LSpec}G
\]
as a multiset difference.  This component is the subject of
\cite{ShapiroN22}.  Among other results, that paper gives an explicit
edge-space model for $A_G^{(n-2,2)}$, all-order support-subgraph trace
formulae, and a complete cubic inversion for trees after adjoining the
ordinary Laplacian spectrum.  Thus the token-graph description above and
the support expansion of the present paper meet already in the first new
two-row component.

\begin{corollary}[The spectrum determines the algebraic connectivity]
\label{cor:aldous}
Let $G$ be connected, and let $a(G)=\lambda_2(L_G)$ be its algebraic
connectivity.  Then the largest eigenvalue of $\Gamma(G)$ other than $m$
is
\[
 \lambda_2(\Gamma(G))=m-a(G),
\]
and by bipartite symmetry the smallest eigenvalue other than $-m$ is
$-(m-a(G))$.  In particular $\operatorname{Spec}\Gamma(G)$ determines
$a(G)$.  Also, for every Laplacian eigenvalue $\mu\neq 0$ of $G$, the
number $m-\mu$ is an eigenvalue of $\Gamma(G)$ with multiplicity at
least $(n-1)\operatorname{mult}_{L_G}(\mu)$.
\end{corollary}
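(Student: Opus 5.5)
The plan is to combine the isotypic decomposition of the regular representation with Proposition~\ref{prop:standard-laplacian} and the Caputo--Liggett--Richthammer theorem \cite{CLR}. By the decomposition $\mathbb C[S_n]\simeq\bigoplus_\lambda f^\lambda V_\lambda$, the spectrum of $\Gamma(G)$ is the union of the spectra of $A_G^\lambda$, each repeated $f^\lambda$ times. The trivial representation contributes the eigenvalue $m$. The sign representation contributes $-m$, since every transposition acts by $-1$ there.

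First I would show that $m$ occurs exactly once. Since $G$ is connected, $T_G$ generates $S_n$, so $\Gamma(G)$ is connected and $m$ is a simple eigenvalue. By the bipartite symmetry of Proposition~\ref{prop:bipartite-symmetry}, the eigenvalue $-m$ is also simple. Every other eigenvalue therefore comes from some $\lambda\ne(n),(1^n)$.

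The main step is the CLR theorem. The interchange process on $G$ with unit rates has generator $A_G-mI$ on $\mathbb C[S_n]$, and the random walk on $G$ has generator $A_G^{(n-1,1)}-mI$ on $W_0$ (plus the trivial part). CLR asserts that these two generators have the same spectral gap. Hence the largest eigenvalue of $A_G$ on $\mathbb C[S_n]\ominus\mathbf 1$ equals the largest eigenvalue of $A_G$ on $W_0$. By Proposition~\ref{prop:standard-laplacian}, that eigenvalue is $m-\lambda_2=m-a(G)$, which gives $\lambda_2(\Gamma(G))=m-a(G)$.

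Two bookkeeping points need checking here. First, CLR's gap compares nontrivial eigenvalues of the generator, which are $m-\theta$ where $\theta$ ranges over eigenvalues of $A_G$ on the complement of the constants. Second, the standard block $W_0$ does occur in the regular representation, so $m-a(G)$ genuinely appears and the inequality $\lambda_2(\Gamma(G))\le m-a(G)$ from CLR is attained. Since $a(G)>0$ for connected $G$, this eigenvalue is strictly less than $m$.

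For the smallest eigenvalue, I would use the spectral symmetry: negation is a bijection of the spectrum that sends $m$ to $-m$, so the smallest eigenvalue other than $-m$ is $-(m-a(G))$. Tensoring with the sign representation gives the same conclusion, since it maps $V_\lambda$ to $V_{\lambda'}$ and $A_G$ to $-A_G$. Determination of $a(G)$ then follows: the spectrum gives $m$ as its largest eigenvalue and $\lambda_2(\Gamma(G))$ as its second largest distinct value, and $a(G)=m-\lambda_2(\Gamma(G))$.

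Finally, for the multiplicity claim: for a nonzero Laplacian eigenvalue $\mu$, Proposition~\ref{prop:standard-laplacian} shows that $m-\mu$ is an eigenvalue of $A_G^{(n-1,1)}$ with multiplicity $\operatorname{mult}_{L_G}(\mu)$, because the zero eigenvalue is simple for connected $G$ and is removed with $\mathbf 1$. The block $V_{(n-1,1)}$ occurs $f^{(n-1,1)}=n-1$ times in the regular representation, so the multiplicity in $\Gamma(G)$ is at least $(n-1)\operatorname{mult}_{L_G}(\mu)$. Other irreducible blocks may add further copies, which is why the statement is only a lower bound.

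The main obstacle is the CLR theorem itself, which is deep; here we only invoke it and must set up the generator normalization correctly so that it applies.
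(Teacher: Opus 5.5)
Your proposal is correct and takes the same route as the paper. You use connectedness to make $m$ simple, the Caputo--Liggett--Richthammer theorem to equate the interchange gap $m-\lambda_2(\Gamma(G))$ with $a(G)$, bipartite symmetry for the bottom of the spectrum, and the standard component occurring $f^{(n-1,1)}=n-1$ times for the multiplicity bound; the one slip is a sign, since with your generator $A_G-mI$ the nontrivial eigenvalues are $\theta-m$ and it is the gap that equals $\min(m-\theta)$.
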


\begin{proof}
The operator $mI-A_G$ on $\mathbb C[S_n]$ is the generator of the
interchange process on $G$ with unit rates, and $L_G$ is the generator
of the random walk on $G$.  The Caputo--Liggett--Richthammer theorem
\cite{CLR} (Aldous' spectral gap conjecture) says that their spectral
gaps coincide.  Since $G$ is connected, the eigenvalue $m$ of $\Gamma(G)$
is simple (Theorem~\ref{thm:connectedness-test}), so the spectral gap of
the interchange process is $m-\lambda_2(\Gamma(G))$.  This proves the
first statement.  The multiplicity bound follows from
Proposition~\ref{prop:standard-laplacian}, since $V_{(n-1,1)}$ occurs
$n-1$ times in the regular representation.
\end{proof}

\begin{remark}
In the language of token graphs, Corollary~\ref{cor:aldous} says that
$a(F_k(G))=a(G)$ for all $k$.  Dalf\'o et al.\ conjectured this, and it
was later recognised to be a consequence of the
Caputo--Liggett--Richthammer theorem \cite{DalfoEtAl,DalfoFiol}.  We
checked $\lambda_2(\Gamma(G))=m-a(G)$ numerically for all
connected graphs with at most $8$ vertices.
\end{remark}

\subsection{Finite-type consequences of the moment expansion}

The universal support expansion has the following immediate consequence.

\begin{corollary}[Moments are finite-type graph invariants]
For every fixed $k$, the normalized moment $M_k(G)/n!$ belongs to the
finite-dimensional span of the subgraph-counting functions $N_F(G)$ with
at most $k$ edges.  In particular, for fixed $k$ it depends only on the
numbers of small subgraphs of $G$.
\end{corollary}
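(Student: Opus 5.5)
This corollary is essentially a restatement of the Universal support expansion theorem, so the plan is to read it off from that theorem and check that the index set is finite and independent of $G$. Fix $k$. By the universal support expansion,
\[
 \frac{M_k(G)}{n!}=\sum_F c_k(F)\,N_F(G),
\]
where $F$ runs over isomorphism classes of graphs without isolated vertices having at most $k$ edges. The first step is to observe that this index set is finite. Such an $F$ has at most $2k$ vertices, since every vertex lies on an edge, and there are only finitely many isomorphism classes of graphs on at most $2k$ vertices. One may sharpen this with Theorem~\ref{thm:rank-bound}, which gives $\operatorname{rk}(F)\le k/2$, but this is not needed for finiteness.

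The second step is to note that the coefficients $c_k(F)$ are universal integers. By their definition they depend only on $F$ and $k$, and not on $G$ or on $n$. Hence the function $G\mapsto M_k(G)/n!$, regarded as a function on all finite simple graphs (with $n$ varying), is a fixed finite linear combination of the functions $N_F$ with $|E(F)|\le k$. It therefore lies in their span, and that span is finite-dimensional.

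The final clause follows immediately. If two graphs $G_1,G_2$ satisfy $N_F(G_1)=N_F(G_2)$ for every $F$ with at most $k$ edges and no isolated vertices, then the expansion gives equal values of $M_k/n!$.

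The only point needing care, and it is a minor one, is the normalization by $n!$. The unnormalized moment $M_k$ depends on $n$ as well, so the finite-type claim concerns the normalized quantity only. I would state this explicitly. Apart from that remark there is no real obstacle: the proof reduces to the finiteness count in the first step together with the $G$-independence of the coefficients $c_k(F)$.
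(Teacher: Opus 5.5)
Your proposal is correct and follows the paper, which presents this corollary as an immediate consequence of the universal support expansion with no separate argument. Your explicit finiteness check (a graph with no isolated vertices and at most $k$ edges has at most $2k$ vertices) and your remark that the $G$-independence of $c_k(F)$ concerns the normalized moment only are details the paper leaves implicit.
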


Thus the inverse problem can be refined as follows.  For each $r\geq 1$,
one should study the linear span of
\[
 G\longmapsto \frac{M_{2}(G)}{n!},
 \frac{M_{4}(G)}{n!},\ldots,
 \frac{M_{2r}(G)}{n!}
\]
inside the vector space spanned by subgraph-counting functions
$N_F(G)$ with at most $2r$ edges.  The concrete question is which
individual subgraph counts can be recovered from these spectral
linear combinations.

The formula for $M_4$ below shows that the edge count and the signed
combination $6N_{K_3}-N_{P_3}$ are recoverable.  Thus the fourth moment
already sees the Coxeter relation among the three transpositions attached
to a triangle.  The formula for $M_6$ shows explicitly how the next local
support types enter the moment expansion.  This leads to the following
natural strengthening of the basic reconstruction problem.

\begin{question}
Do the functions $M_2,M_4,\ldots,M_{2r}$ determine any natural triangular
system of subgraph-counting functions $N_F(G)$, at least after restricting
to graphs with a fixed number of vertices or to natural subclasses such
as triangle-free graphs?
\end{question}

A particularly accessible subfamily consists of star supports.  Since
star counts are polynomial expressions in the degrees of $G$, this
suggests the following concrete problem.

\begin{question}
Do the even Cayley moments determine the power sums
\[
 \sum_{v\in V(G)} d_v^q
\]
of the degree sequence?  Equivalently, do they determine the degree
sequence of $G$?
\end{question}

The first case $q=1$ is $2|E(G)|$ and is determined by $M_2$.  The case
$q=2$ is determined by $M_4$ on triangle-free graphs, but in general the
triangle term in the fourth moment creates an obstruction which must be
separated by higher moments or additional representation-theoretic data.

\subsection{A weighted fourth trace and exact reconstruction}

The diagonal specialization in the ordinary moments hides which pairs of
edge-transpositions interact.  It is useful to record how much information
is present before that specialization.  Give the edges independent
commuting variables $\mathbf y=(y_e)_{e\in E(G)}$ and put
\[
 A_G(\mathbf y)=\sum_{e\in E(G)}y_e\tau_e,\qquad
 \mathcal R_k(G;\mathbf y)
 =\operatorname{tr}_{\mathrm{reg}}\bigl(A_G(\mathbf y)^k\bigr).
\]
Then $\mathcal R_k(G;\mathbf 1)=M_k(G)$, and
\[
 \mathcal R_k(G;\mathbf y)=n!\,[1]A_G(\mathbf y)^k.
\]
Thus every coefficient counts identity words with a prescribed edge
multiplicity, multiplied by $n!$.

\begin{theorem}[Weighted fourth-trace reconstruction]
\label{thm:weighted-fourth}
Let $G$ be a finite simple graph on $n$ vertices.  For every edge $e$,
\[
 [y_e^4]\,\mathcal R_4(G;\mathbf y)=n!,
\]
and for two distinct edges $e,f$,
\[
 [y_e^2y_f^2]\,\mathcal R_4(G;\mathbf y)=
 \begin{cases}
 4n!,& e\cap f\neq\varnothing,\\
 6n!,& e\cap f=\varnothing.
 \end{cases}
\]
Consequently the polynomial $\mathcal R_4(G;\mathbf y)$, considered up
to permutation of the edge variables, determines the line graph $L(G)$.
Moreover, among connected simple graphs it determines $G$ up to
isomorphism.
\end{theorem}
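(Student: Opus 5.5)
We use the identity $\mathcal R_4(G;\mathbf y)=n!\,[1]A_G(\mathbf y)^4$. Expanding the fourth power, the coefficient of a monomial $\prod_e y_e^{a_e}$ with $\sum a_e=4$ is $n!$ times the number of words $(e_1,e_2,e_3,e_4)$ with letter multiplicities $a_e$ whose product $\tau_{e_1}\tau_{e_2}\tau_{e_3}\tau_{e_4}$ is the identity. This reduces the problem to counting identity words with prescribed multiplicities.

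For $y_e^4$ there is one word, $(e,e,e,e)$, and $\tau_e^4=1$, so the coefficient is $n!$.

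For $y_e^2y_f^2$ with $e\neq f$ there are $\binom42=6$ arrangements: $eeff$, $effe$, $ffee$, $feef$, $efef$, $fefe$.

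\begin{itemize}
\item The four arrangements in which equal letters are adjacent, cyclically (namely $eeff$, $ffee$, $effe$, $feef$), always give $1$ because $\tau^2=1$.
\item The alternating words $efef$ and $fefe$ give $(\tau_e\tau_f)^2$. This is the identity exactly when $\tau_e,\tau_f$ commute, that is, when the transpositions are disjoint.
\item If $e=\{a,b\}$ and $f=\{b,c\}$ share a vertex, then $\tau_e\tau_f$ is a $3$-cycle, whose square is not $1$.
\end{itemize}

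Hence the count is $4$ for adjacent edges and $6$ for disjoint edges, which gives the stated coefficients.

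For the reconstruction statement, suppose $\mathcal R_4(G;\mathbf y)$ is known up to a relabelling of the variables. The variables are indexed by $E(G)$. For each unordered pair of distinct variables we read off the coefficient of $y_e^2y_f^2$ and declare $e,f$ adjacent iff it equals $4n!$. This recovers the line graph $L(G)$ on the vertex set $E(G)$, up to isomorphism; $n$ itself is recoverable from $[y_e^4]=n!$. Permuting variables permutes the vertices of $L(G)$, so the isomorphism type of $L(G)$ is well defined.

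For connected $G$ we invoke Whitney's line graph theorem: two connected graphs with isomorphic line graphs are isomorphic, except for the pair $K_3$ and $K_{1,3}$. This exceptional pair is the one genuine obstacle, because $L(K_3)\cong L(K_{1,3})\cong K_3$. We separate the two using the number of vertices: $n=3$ for $K_3$ and $n=4$ for $K_{1,3}$, and $n$ is read off from $[y_e^4]=n!$.

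One degenerate case needs care: the graph with a single vertex and no edges, where $\mathcal R_4=0$. It is determined because $n$ is fixed by the setup. Connectedness is used precisely so that Whitney's theorem applies; for disconnected graphs, isolated vertices are invisible to $L(G)$.
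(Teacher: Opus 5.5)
Your proposal is correct and follows the paper's proof essentially step by step. You count the six words with two copies of each letter: the four with cyclically adjacent repeats cancel, and the two alternating words give $(\tau_e\tau_f)^{\pm 2}$, which is trivial exactly for disjoint edges. You then read off $L(G)$ and apply Whitney's theorem, with $n$ recovered from $[y_e^4]=n!$ separating $K_3$ from $K_{1,3}$. Two harmless slips: $fefe$ gives $(\tau_f\tau_e)^2=\bigl((\tau_e\tau_f)^2\bigr)^{-1}$ rather than $(\tau_e\tau_f)^2$, and the connected edgeless case is settled simply because it must be $K_1$, not because $n$ is fixed by the setup.
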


\begin{proof}
The coefficient of $y_e^4$ comes from the single word $eeee$, whose product
is the identity.  For $e\neq f$ there are six words containing two copies
of each letter.  If $e$ and $f$ are disjoint, the corresponding
transpositions commute, so all six words have identity product.  If they
are adjacent, four of the six words reduce by adjacent cancellations,
whereas the two alternating words $efef$ and $fefe$ are nontrivial because
$ef$ has order three.  This proves the coefficient formulae.

Hence the coefficient attached to each unordered pair of variables tells
whether the corresponding edges of $G$ meet, which is precisely the
adjacency relation in $L(G)$.  The pure fourth coefficient also recovers
$n!$, and hence $n$, whenever $G$ has an edge.  Whitney's line-graph theorem
\cite{Whitney} then reconstructs every connected simple graph, with the
only abstract line-graph ambiguity $K_3$ versus $K_{1,3}$ resolved by the
recovered value of $n$.  The connected edgeless case is just $K_1$.
\end{proof}

\begin{remark}
The theorem concerns a weighted refinement, not the ordinary Cayley
spectrum.  Setting every $y_e=1$ adds all pair coefficients together and
forgets which edge pairs are adjacent.  This is exactly why the unweighted
reconstruction problem remains nontrivial.  A parallel phenomenon occurs
in the $(n-2,2)$ component: \cite{ShapiroN22} shows that its weighted
quadratic trace already recovers $L(G)$ for $n\neq7$, while its weighted
quartic trace does so for every $n\ge4$.  Thus the same line-graph
mechanism appears both in the regular representation and in the first new
irreducible component, but at different trace degrees.
\end{remark}

\section{First moment calculations}\label{sec:first-moments}

We now record the first moments in the support-expansion language.  The
calculations below are meant to serve as the first concrete data for the
project.

We use the following notation.  The graph $P_r$ denotes the path on $r$
vertices, $C_r$ the cycle on $r$ vertices, $K_r$ the complete graph on
$r$ vertices, $K_{1,3}$ the claw, and $2K_2$ and $3K_2$ disjoint unions
of two and three edges, respectively.  We write $K_3\sqcup K_2$ and
$P_3\sqcup K_2$ for disjoint unions.  The graph $\operatorname{Paw}$ is
the triangle with one pendant edge, and $K_4-e$ is the complete graph on
four vertices with one edge removed.

\begin{theorem}[Moments up to order six]\label{thm:first-moments}
Let $G$ be a simple graph.  Then
\[
\frac{M_0(G)}{n!}=1,
\qquad
\frac{M_1(G)}{n!}=0,
\qquad
\frac{M_2(G)}{n!}=N_{K_2}(G),
\qquad
\frac{M_3(G)}{n!}=0,
\]
\[
\frac{M_4(G)}{n!}
=
N_{K_2}(G)+4N_{P_3}(G)+6N_{2K_2}(G)+12N_{K_3}(G),
\]
and
\[
\begin{aligned}
\frac{M_6(G)}{n!}
={}&N_{K_2}(G)
+20N_{P_3}(G)
+30N_{2K_2}(G)
+180N_{K_3}(G) \\
&+30N_{K_{1,3}}(G)
+42N_{P_4}(G)
+60N_{P_3\sqcup K_2}(G)
+90N_{3K_2}(G) \\
&+72N_{C_4}(G)
+96N_{\operatorname{Paw}}(G)
+180N_{K_3\sqcup K_2}(G) \\
&+132N_{K_4-e}(G)
+96N_{K_4}(G).
\end{aligned}
\]
Here all subgraph counts are non-induced.
\end{theorem}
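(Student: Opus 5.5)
The plan is to apply the universal support expansion, so that the whole statement reduces to computing the constants $c_k(F)$ for $k\le 6$. The cases $k=0,1,2,3$ are immediate. Only the empty word gives $c_0$. Odd $k$ vanish by Proposition~\ref{prop:bipartite-symmetry}. For $k=2$ the only identity words are $ee$, so $c_2(K_2)=1$.

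Next I would cut down the list of supports using Theorem~\ref{thm:rank-bound} and Proposition~\ref{prop:exp-formula}. For $k=4$ we need $\operatorname{rk}(F)\le 2$ and $|E(F)|\le 4$, which leaves $K_2$, $P_3$, $K_3$ and $2K_2$. For $k=6$ we need $\operatorname{rk}(F)\le 3$ and $|E(F)|\le 6$. The connected candidates are then $K_2,P_3,K_3$ together with all connected graphs on four vertices: $K_{1,3},P_4,C_4,\operatorname{Paw},K_4-e,K_4$. The disconnected candidates are $2K_2$, $3K_2$, $P_3\sqcup K_2$ and $K_3\sqcup K_2$.

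The disconnected coefficients all follow from the binomial convolution in Proposition~\ref{prop:exp-formula}, once the connected values $c_2(K_2)=1$, $c_4(K_2)=1$, $c_4(P_3)=4$, $c_4(K_3)=12$ are known:
\begin{itemize}
\item $c_4(2K_2)=\binom42=6$, $c_6(2K_2)=\binom62+\binom64=30$;
\item $c_6(3K_2)=6!/2^3=90$;
\item $c_6(P_3\sqcup K_2)=\binom62\cdot 4=60$;
\item $c_6(K_3\sqcup K_2)=\binom62\cdot 12=180$.
\end{itemize}

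For the connected coefficients at $k=4$ I would argue by hand. $c_4(K_2)=1$ comes from the word $eeee$. For $P_3$ the words with two $e$'s and two $f$'s are counted as in Theorem~\ref{thm:weighted-fourth}, giving $4$. For $K_3$ one uses the relation $(ab)(bc)(ab)=(ac)$, so words of type $e,f,e,g$ and their rotations and reflections give the identity. A careful count gives $12$, and I would check this against the character value.

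For the connected $k=6$ coefficients there are two natural tools. The first is direct enumeration over $E(F)^6$, restricted to words using every edge, which is the finite algorithm of the earlier Remark. The second is inclusion–exclusion: let $W_6(F)=[1]A_F^6$ count all identity words in the edges of $F$. Then
\[
c_6(F)=\sum_{H\subseteq E(F)}(-1)^{|E(F)|-|H|}\,W_6(H),
\]
and each $W_6$ is computable via the representation-theoretic formula on $S_{|V|}$ with $|V|\le 4$. In that formula the transposition sums act through $mI-L$ on $(n-1,1)$ by Proposition~\ref{prop:standard-laplacian}, and through small explicit matrices on the remaining irreducibles $(2,2)$, $(2,1,1)$ and $(1^4)$.

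Tree supports can also be handled combinatorially. For $K_{1,3}$ and $P_4$ at $k=6$ every edge appears exactly twice, because the words are minimal transitive factorizations. The count of these follows from the Hurwitz formula on the subgroups involved, which gives $30$ and $42$.

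The main obstacle is the cyclic supports $K_3$ (at $k=6$), $C_4$, $\operatorname{Paw}$, $K_4-e$ and $K_4$, where braid relations produce genuinely non-reduced identity words. For these I would use the inclusion–exclusion over subsets combined with the trace formula, and only then fall back on explicit machine enumeration.

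As a consistency check I would use Corollary~\ref{cor:hurwitz}, which is exactly the $n=4$ identity displayed after it:
\[
4\cdot 30+12\cdot 42+12\cdot 96+3\cdot 72+6\cdot 132+96=2880.
\]
For $n=3$ the same corollary gives $3c_4(P_3)+c_4(K_3)=24$, which checks the fourth-moment coefficients. I would also check against $G=K_n$ for small $n$, where $A_G$ is central and $M_k$ is given by characters; for example $n=3$ gives $M_6/6=3^6\cdot 2/6=243$. Finally, summing $c_k(F)N_F(G)$ over the supports listed above gives the displayed formulas.
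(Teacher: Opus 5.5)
Your plan is essentially the paper's own argument. It uses the support expansion and hand enumeration at order four. At order six it gets the connected coefficients from $c_6(F)=\sum_{H}(-1)^{|E(F)\setminus H|}W_6(H)$ (the paper evaluates $W_6(H)$ by dynamic programming on $S_{V(F)}$ rather than by irreducible traces) and the disconnected ones from Proposition~\ref{prop:exp-formula}. Your use of Theorem~\ref{thm:rank-bound} to pin down the thirteen supports is a welcome step that the paper's proof leaves implicit.

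The one step that fails as written is the optional tree shortcut, for two reasons. First, being a minimal transitive factorization does not force each edge to occur twice: the triangle word $abac$ is one. The right reason is that a tree edge $f$ used only once would put $\tau_f$ in $S_A\times S_B$, where $A,B$ are the two sides of $f$, which is impossible. Second, the Hurwitz formula only gives the aggregate $2880$ over all spanning supports of $K_4$, not $30$ and $42$ separately. Your enumeration route already covers these two coefficients, so the proof does not depend on the shortcut.
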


\begin{proof}
The identities for $M_0$ and $M_1$ are immediate.  For $M_2$, a product
of two edge-transpositions is the identity if and only if the two
transpositions are equal.  Hence $M_2(G)/n!=N_{K_2}(G)$.

All odd moments vanish because a product of an odd number of
transpositions is an odd permutation and hence cannot be the identity.
This gives $M_3(G)=0$.

For $M_4$, the support of an identity word may have one, two, or three
edges.  If the support is a single edge, there is one word.  If the
support is a path $P_3$, i.e. two adjacent edges, there are four identity
words:
\[
aabb,
\quad abba,
\quad baab,
\quad bbaa.
\]
If the support is $2K_2$, i.e. two disjoint edges, the corresponding
transpositions commute, and all six words with two occurrences of each
edge are identity words.  Finally, if the support is a triangle with edge
transpositions $a,b,c$, then the Coxeter relation $aba=c$ and its cyclic
variants give twelve identity words of length four using all three
edges.  Equivalently, for every ordering of two adjacent triangle edges
one has a word of the form $abac=1$, and the same words are counted with
all cyclic choices of the distinguished edge.  This gives the displayed
fourth-moment formula.

The sixth moment is obtained from the universal support expansion by
enumerating the possible supports of length-six identity words.  There
are exactly thirteen support types with nonzero coefficient.  Their
coefficients $c_6(F)$ are as follows:
\[
\begin{array}{c|ccccccccccccc}
F & K_2 & P_3 & 2K_2 & K_3 & K_{1,3} & P_4 & P_3\sqcup K_2
& 3K_2 & C_4 & \operatorname{Paw} & K_3\sqcup K_2 & K_4-e & K_4 \\
\hline
c_6(F) & 1 & 20 & 30 & 180 & 30 & 42 & 60
& 90 & 72 & 96 & 180 & 132 & 96.
\end{array}
\]
Substituting these coefficients into
\[
\frac{M_6(G)}{n!}=\sum_F c_6(F)N_F(G)
\]
gives the formula.
\end{proof}

\begin{remark}[Check of the enumeration]
The table in the proof was obtained by the following finite procedure.
For each support graph $F$ with at most six edges one enumerates all
$6$-tuples of edges of $F$ which use every edge of $F$ and multiplies the
corresponding transpositions in $S_{V(F)}$.  The displayed coefficient is
the number of such words with product equal to the identity.  This also
provides a short independent way to reproduce the formula and to compute
higher moments.  In the present version, all coefficients were
recomputed by machine.  We computed $c_k(F)$ for connected $F$ by
inclusion--exclusion over edge subsets, $c_k(F)=\sum_{H\subseteq
E(F)}(-1)^{|E(F)\setminus H|}W_k(H)$, where $W_k(H)$ is the number of
identity words of length $k$ over $H$, obtained by dynamic programming on
$S_{V(F)}$.  The disconnected coefficients then follow from
Proposition~\ref{prop:exp-formula}.  For example
$c_6(P_3\sqcup K_2)=\binom64\cdot4\cdot1=60$ and
$c_6(3K_2)=6!/2^3=90$.  All entries of the table were confirmed.  We
also checked $M_k(G)/n!=W_k(E(G))$ directly against the full regular
spectrum for random graphs with $n\le5$.
\end{remark}

\begin{corollary}[The fourth-moment invariant]
\label{cor:fourth-moment-invariant}
The Cayley spectrum of $\Gamma(G)$ determines
\[
 |E(G)|
 \qquad\text{and}\qquad
 6N_{K_3}(G)-N_{P_3}(G).
\]
In particular, on the class of triangle-free graphs it determines
$N_{P_3}(G)$ and therefore
\[
 \sum_{v\in V(G)}d_v^2.
\]
\end{corollary}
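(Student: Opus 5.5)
The plan is to read everything off the explicit formulae in Theorem~\ref{thm:first-moments}, together with the fact that $\operatorname{Spec}\Gamma(G)$ determines every moment $M_k(G)=\operatorname{tr}(A_G^k)$ as a power sum of eigenvalues with multiplicity. Along the way we must check that $n$ itself is spectrally determined, since the formulae are normalized by $n!$.

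The first step is to recover $n$. The number of eigenvalues of $\Gamma(G)$, counted with multiplicity, is $|S_n|=n!$, which fixes $n$. Equivalently, $M_0(G)=n!$. Hence the normalized moments $M_k(G)/n!$ are spectral invariants.

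Next we obtain the two invariants. By Theorem~\ref{thm:first-moments}, $M_2(G)/n!=N_{K_2}(G)=|E(G)|$, so the edge count is determined. The fourth-moment formula gives
\[
\frac{M_4(G)}{n!}=N_{K_2}(G)+4N_{P_3}(G)+6N_{2K_2}(G)+12N_{K_3}(G).
\]
The term $N_{2K_2}$ must be eliminated. Every unordered pair of distinct edges is either adjacent, in which case it spans a $P_3$, or disjoint. For a simple graph two distinct edges share at most one vertex, so each adjacent pair spans exactly one copy of $P_3$, and each copy of $P_3$ arises from exactly one pair. Hence
\[
N_{2K_2}(G)=\binom{m}{2}-N_{P_3}(G),\qquad m=|E(G)|.
\]
Substituting gives
\[
\frac{M_4(G)}{n!}=m+6\binom m2+2\bigl(6N_{K_3}(G)-N_{P_3}(G)\bigr).
\]
The first two terms are already determined by $M_2$, so $6N_{K_3}-N_{P_3}$ is determined.

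For the triangle-free case, $N_{K_3}(G)=0$, so $N_{P_3}(G)$ is determined. We then use the standard count $N_{P_3}(G)=\sum_v\binom{d_v}{2}$: each copy of $P_3$ is determined by its middle vertex together with a pair of incident edges. This gives
\[
\sum_v d_v^2=2N_{P_3}(G)+\sum_v d_v=2N_{P_3}(G)+2m,
\]
which is spectrally determined.

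No step is a real obstacle. The only point needing care is the elimination of $N_{2K_2}$, which uses the pair identity above, including the fact that $P_3$ counts are non-induced copies. Noting that $n$ is spectrally determined removes any ambiguity coming from the normalization.
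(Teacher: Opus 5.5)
Your proposal is correct and follows the paper's proof essentially step for step. You eliminate $N_{2K_2}$ via $N_{2K_2}=\binom m2-N_{P_3}$, rewrite $M_4/n!$ as $m+6\binom m2+2(6N_{K_3}-N_{P_3})$, and then use $\sum_v d_v^2=2N_{P_3}+2m$. Your explicit check that $n$ is recovered from the total multiplicity $n!$ is a small extra; the paper leaves it implicit here and records it only in its later corollary on necessary conditions for cospectrality.
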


\begin{proof}
Let $m=|E(G)|$, $P=N_{P_3}(G)$, $D=N_{2K_2}(G)$, and
$T=N_{K_3}(G)$.  The number $m$ is determined by $M_2(G)$.  Also
\[
D=\binom m2-P,
\]
because every unordered pair of distinct edges is either adjacent, hence
contributes to $P$, or disjoint, hence contributes to $D$.  Substituting
this into the fourth-moment formula gives
\[
 \frac{M_4(G)}{n!}
 =m+6\binom m2-2P+12T.
\]
Thus $6T-P$ is determined by the spectrum.  If $G$ is triangle-free, then
$T=0$, so $P$ is determined.  Finally,
\[
 P=\sum_{v\in V(G)}\binom{d_v}{2}
 \qquad\text{and}\qquad
 \sum_v d_v^2=2P+2m.
\]
\end{proof}

\begin{corollary}[Triangle-free sixth-moment consequence]
\label{cor:triangle-free-sixth}
Let $G$ be triangle-free.  Then the Cayley spectrum of $\Gamma(G)$
determines the linear combination
\[
 5N_{K_{1,3}}(G)+2N_{P_4}(G)+12N_{C_4}(G).
\]
If $G$ has girth at least five, this reduces to the determination of
\[
 5N_{K_{1,3}}(G)+2N_{P_4}(G).
\]
If, in addition, the maximum degree of $G$ is at most two, then the
spectrum determines $N_{P_4}(G)$.
\end{corollary}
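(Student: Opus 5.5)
The plan is to specialise the sixth-moment formula of Theorem~\ref{thm:first-moments} to triangle-free graphs, and then remove every remaining count that is not among $N_{K_{1,3}},N_{P_4},N_{C_4}$. These extra counts will be rewritten in terms of $m=|E(G)|$ and $P=N_{P_3}(G)$. Both $m$ and $P$ are already spectrally determined for triangle-free graphs, by $M_2$ and Corollary~\ref{cor:fourth-moment-invariant}.

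\emph{Step 1: drop the triangle terms.} If $G$ is triangle-free, every support containing a triangle has $N_F(G)=0$. These supports are $K_3$, $\operatorname{Paw}$, $K_3\sqcup K_2$, $K_4-e$ and $K_4$. Writing $S=N_{K_{1,3}}$, $Q=N_{P_4}$, $X=N_{P_3\sqcup K_2}$, $Y=N_{3K_2}$ and $D=N_{2K_2}$, we are left with
\[
\frac{M_6}{n!}=m+20P+30D+30S+42Q+60X+90Y+72N_{C_4}.
\]
As in the proof of Corollary~\ref{cor:fourth-moment-invariant}, $D=\binom m2-P$.

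\emph{Step 2: eliminate $X$ and $Y$ by double counting.} First count pairs consisting of a copy of $P_3$ and a third edge different from its two edges; there are $P(m-2)$ of them. The third edge is either disjoint from the $P_3$, giving a copy of $P_3\sqcup K_2$, or it meets the $P_3$. In the second case it completes a $K_{1,3}$, a $P_4$ or a $K_3$, and $K_3$ is excluded here. Each claw contains $3$ copies of $P_3$ and each $P_4$ contains $2$. Hence
\[
X=P(m-2)-3S-2Q.
\]
Second, classify all $3$-edge subsets by isomorphism type; without triangles this gives $\binom m3=S+Q+X+Y$. This expresses $Y$ in terms of $m$, $S$, $Q$ and $X$.

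\emph{Step 3: collect coefficients.} Substituting, $60X+90Y=90\binom m3-90S-90Q-30X$, and then $-30X=-30P(m-2)+90S+60Q$. The net coefficients become $30$ for $S$, $42-90+60=12$ for $Q$, and $72$ for $N_{C_4}$. All other terms are polynomials in $m$ and $P$. Therefore $M_6/n!$ equals $6(5S+2Q+12N_{C_4})$ plus a known function of $(m,P)$, which proves the main claim.

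\emph{Step 4: special cases.} Girth at least five forces $N_{C_4}=0$, which gives the second claim. Maximum degree at most two additionally forces $S=0$, leaving $2Q$, which gives the third claim.

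The only delicate point is the bookkeeping in Step~2, namely the multiplicities $3$ and $2$ and the correct use of triangle-freeness. I would double-check these multiplicities on a small example such as $P_5$ before trusting the final coefficients.
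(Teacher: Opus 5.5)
Your proposal is correct and follows the paper's proof. Both arguments drop the triangle-containing supports, eliminate $N_{2K_2}$, $N_{P_3\sqcup K_2}$ and $N_{3K_2}$ through the same three identities, and read off $30N_{K_{1,3}}+12N_{P_4}+72N_{C_4}$ plus a function of $m$ and $N_{P_3}$. Your double count of pairs (copy of $P_3$, extra edge) is just a repackaging of the paper's identity $\sum_{u-v-w}(d_u+d_v+d_w)=3S+4P+2R$.
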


\begin{proof}
Assume that $G$ is triangle-free and write
\[
 m=N_{K_2}(G),\quad P=N_{P_3}(G),\quad
 S=N_{K_{1,3}}(G),\quad R=N_{P_4}(G),\quad C=N_{C_4}(G).
\]
The disconnected support counts appearing in the sixth moment can be
eliminated in terms of these quantities.  First
\[
 N_{2K_2}=\binom m2-P.
\]
Next,
\[
 N_{P_3\sqcup K_2}=(m-2)P-3S-2R.
\]
Indeed, for each two-edge path $u-v-w$, the number of edges disjoint
from its three vertices is $m-d_u-d_v-d_w+2$; summing this over all
$P_3$'s and using triangle-freeness gives
\[
 \sum_{u-v-w}(d_u+d_v+d_w)=3S+4P+2R.
\]
Finally, classifying unordered triples of edges by their support gives
\[
 N_{3K_2}=\binom m3-N_{P_3\sqcup K_2}-S-R.
\]
Substitution into Theorem~\ref{thm:first-moments}, with all triangle
support types removed, yields
\[
\begin{aligned}
\frac{M_6(G)}{n!}
={}&m+30\binom m2+90\binom m3+(50-30m)P  \\
&+30S+12R+72C.
\end{aligned}
\]
By Corollary~\ref{cor:fourth-moment-invariant}, $m$ and $P$ are already
determined on the triangle-free class.  Hence $30S+12R+72C$, equivalently
$5S+2R+12C$, is determined.  The two stated specializations follow by
putting $C=0$ and then $S=0$.
\end{proof}

\begin{corollary}[Regular graphs: triangles and four-vertex supports]
\label{cor:regular-graphs}
Let $G$ be a $d$-regular graph on $n$ vertices.  Then the Cayley spectrum
of $\Gamma(G)$ determines the number of triangles $N_{K_3}(G)$.  It also
determines the combination
\[
 6N_{C_4}(G)+11N_{K_4-e}(G)+8N_{K_4}(G).
\]
In particular, if $G$ is triangle-free and regular, then the Cayley
spectrum determines $N_{C_4}(G)$.  Hence this holds, in particular, for
regular bipartite graphs.
\end{corollary}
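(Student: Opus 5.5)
The plan is to show that on the class of $d$-regular graphs every support count in Theorem~\ref{thm:first-moments} can be rewritten as a spectrally determined quantity, except the counts of $C_4$, $K_4-e$ and $K_4$.

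\textbf{Step 1: the basic parameters.} The spectrum has $n!$ eigenvalues counted with multiplicity, so it determines $n$. By Theorem~\ref{thm:first-moments}, $M_2$ determines $m$, and hence $d=2m/n$. In the $d$-regular case $N_{P_3}=n\binom d2$ is then known. So Corollary~\ref{cor:fourth-moment-invariant}, which gives $6N_{K_3}-N_{P_3}$, yields $T:=N_{K_3}$.

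\textbf{Step 2: eliminate the other supports in $M_6$.} I would express each remaining count through $n,d,m,T$ and the unknowns $C:=N_{C_4}$, $N_{K_4-e}$, $N_{K_4}$.
\begin{itemize}
\item Stars: $N_{K_{1,3}}=n\binom d3$.
\item Paths: extend each edge by one further edge at each end, and subtract the closures that form triangles. This gives $N_{P_4}=m(d-1)^2-3T$.
\item Paws: each triangle vertex has $d-2$ outside edges, so $N_{\operatorname{Paw}}=3(d-2)T$.
\item Two disjoint edges: $N_{2K_2}=\binom m2-N_{P_3}$.
\item Path plus edge: for a path $u-v-w$, the number of edges avoiding $\{u,v,w\}$ is $m-3d+2$, plus $1$ if $uw\in E$. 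Hence $N_{P_3\sqcup K_2}=(m-3d+2)N_{P_3}+3T$.
\item Triangle plus edge: $N_{K_3\sqcup K_2}=(m-3d+3)T$.
\item Three disjoint edges: classify unordered edge triples by support, giving $N_{3K_2}=\binom m3-T-N_{K_{1,3}}-N_{P_4}-N_{P_3\sqcup K_2}$.
\end{itemize}

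\textbf{Step 3: substitute.} Putting these into the $M_6$ formula gives
\[
\frac{M_6}{n!}=\Phi(n,d,T)+72N_{C_4}+132N_{K_4-e}+96N_{K_4},
\]
where $\Phi$ is spectrally known. Dividing by $12$ shows that $6N_{C_4}+11N_{K_4-e}+8N_{K_4}$ is determined.

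\textbf{Special cases.} If $G$ is triangle-free, then $N_{K_4-e}=N_{K_4}=0$, so $N_{C_4}$ is determined. Regular bipartite graphs are triangle-free, so they are covered.

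\textbf{Main obstacle.} The real work is the bookkeeping in Step 2, especially the triangle corrections in $N_{P_4}$ and $N_{P_3\sqcup K_2}$. The final claim only needs every term other than the three four-vertex dense supports to be a function of $(n,d,T)$, so exact constants matter only for confirming that. I would sanity-check the result on $K_4$ and on $C_4$, where both sides can be computed directly.
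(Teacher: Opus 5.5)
Your proposal is correct and follows the paper's proof essentially step for step. You recover $n$, $m$ and $d$, obtain $T$ from the fourth-moment invariant via $N_{P_3}=n\binom d2$, and express every other support count in $M_6$ through $(n,d,T)$ before dividing the residual $72N_{C_4}+132N_{K_4-e}+96N_{K_4}$ by $12$.

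The only difference is in $N_{P_3\sqcup K_2}$. You count disjoint edges per two-edge path to get $(m-3d+2)N_{P_3}+3T$. The paper counts adjacent pairs inside edge triples to get $(m-2)P-3T-3S-2N_{P_4}$. Substituting $P=n\binom d2$, $S=n\binom d3$ and $N_{P_4}=m(d-1)^2-3T$ shows that the two expressions coincide.
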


\begin{proof}
Write $m=nd/2$ and $T=N_{K_3}(G)$.  Since
\[
 N_{P_3}(G)=n\binom d2,
\]
Corollary~\ref{cor:fourth-moment-invariant} immediately determines $T$;
more explicitly,
\[
 T=\frac1{12}\left(
 \frac{M_4(G)}{n!}-m-6\binom m2+2n\binom d2
 \right).
\]
We now use the sixth-moment formula.  In a regular graph the remaining
three-edge support counts are determined by $n,d,T$.  Indeed, putting
\[
 P=n\binom d2,\qquad S=n\binom d3,
\]
one has
\[
 N_{P_4}(G)=m(d-1)^2-3T,
\]
because a three-edge path is obtained by choosing its middle edge and one
further incident edge at each endpoint, except when the two new endpoints
coincide and form a triangle.  Counting adjacent edge-pairs inside an
unordered triple of edges gives
\[
 N_{P_3\sqcup K_2}(G)
 =(m-2)P-3T-3S-2N_{P_4}(G),
\]
and therefore
\[
 N_{3K_2}(G)=\binom m3-T-S-N_{P_4}(G)
              -N_{P_3\sqcup K_2}(G).
\]
The triangle-containing four-edge supports which occur in $M_6$ are also
fixed:
\[
 N_{\operatorname{Paw}}(G)=3(d-2)T,
 \qquad
 N_{K_3\sqcup K_2}(G)=(m-3d+3)T.
\]
After these quantities are substituted in Theorem~\ref{thm:first-moments},
the only undetermined terms are
\[
 72N_{C_4}(G)+132N_{K_4-e}(G)+96N_{K_4}(G).
\]
Thus the displayed combination after division by $12$ is determined by the
Cayley spectrum.  If $G$ is triangle-free then both $K_4-e$ and $K_4$ are
absent, so $N_{C_4}(G)$ is determined.
\end{proof}

\subsection{The eighth moment}

By Theorem~\ref{thm:rank-bound}, the supports contributing to $M_8$ have
rank at most $4$, so their connected components have at most five
vertices.  We use the following names for connected graphs on five
vertices: $K_{1,4}$, the chair (fork) $\mathrm{Ch}$, the path $P_5$; the
cycle $C_5$, the banner $\mathrm{Ban}$ ($C_4$ with a pendant edge), the
bull, the cricket $\mathrm{Cr}$ ($K_3$ with two pendant edges at one
vertex), the tadpole $T_{3,2}$ ($K_3$ with a pendant path of length
two); $K_{2,3}$, the house $\mathrm{Ho}$, the butterfly $\mathrm{Bf}$
(two triangles sharing a vertex), $D_3$ and $D_2$ ($K_4-e$ with a
pendant edge attached at a vertex of degree $3$, resp.\ $2$); the book
$K_{1,1,3}$, the gem $\mathrm{Gem}$ ($P_4$ joined to a vertex), $K_4$
with a pendant edge $K_4^{+}$, the wheel $W_4$ minus a spoke $W_4^-$;
$K_5-P_3$ and $W_4=K_5-2K_2$.

\begin{theorem}[Eighth moment]
\label{thm:eighth}
For every graph $G$,
\[
 \frac{M_8(G)}{n!}=\sum_F c_8(F)\,N_F(G),
\]
where the nonzero coefficients are as follows.  For connected $F$ with at
most four vertices:
\[
\begin{array}{c|ccccccccc}
F & K_2 & P_3 & K_3 & K_{1,3} & P_4 & \mathrm{Paw} & C_4 & K_4-e & K_4\\\hline
c_8(F) & 1 & 84 & 1932 & 420 & 552 & 3096 & 2832 & 10032 & 16896
\end{array}
\]
For connected $F$ on five vertices (these all satisfy
$k=8=2(|V(F)|-1)$, i.e.\ they are of genus zero):
\[
\begin{array}{c|ccc|ccccc}
F & K_{1,4} & \mathrm{Ch} & P_5 & C_5 & \mathrm{Ban} & \mathrm{Bull} & \mathrm{Cr} & T_{3,2}\\\hline
c_8(F) & 336 & 608 & 816 & 400 & 1008 & 1448 & 1136 & 1952
\end{array}
\]
\[
\begin{array}{c|ccccc|cccc|cc}
F & K_{2,3} & \mathrm{Ho} & \mathrm{Bf} & D_3 & D_2 & K_{1,1,3} & \mathrm{Gem} & K_4^{+} & W_4^- & K_5-P_3 & W_4\\\hline
c_8(F) & 1200 & 1536 & 3712 & 1584 & 2112 & 1920 & 1856 & 1248 & 1344 & 960 & 752
\end{array}
\]
For $K_5-e$ and $K_5$ the coefficient vanishes, since they have more than
eight edges.  For disconnected $F$, Proposition~\ref{prop:exp-formula}
gives the sixteen nonzero values
\[
\begin{array}{c|cccccccc}
F & 2K_2 & 3K_2 & 4K_2 & P_3\sqcup 2K_2 & K_3\sqcup 2K_2 & P_3\sqcup K_2 & K_3\sqcup K_2 & K_{1,3}\sqcup K_2\\\hline
c_8(F) & 126 & 1260 & 2520 & 1680 & 5040 & 840 & 5880 & 840
\end{array}
\]
\[
\begin{array}{c|cccccccc}
F & P_4\sqcup K_2 & \mathrm{Paw}\sqcup K_2 & C_4\sqcup K_2 & (K_4-e)\sqcup K_2 & K_4\sqcup K_2 & 2P_3 & P_3\sqcup K_3 & 2K_3\\\hline
c_8(F) & 1176 & 2688 & 2016 & 3696 & 2688 & 1120 & 3360 & 10080
\end{array}
\]
\end{theorem}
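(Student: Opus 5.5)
The plan has three parts: reduce everything to the universal support expansion, compute the connected coefficients $c_8(F)$ by machine, and obtain the disconnected ones from the exponential formula.

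\textbf{Reduction to finitely many connected supports.} By the universal support expansion, $M_8(G)/n!=\sum_F c_8(F)N_F(G)$, so the statement amounts to a table of the numbers $c_8(F)$. Theorem~\ref{thm:rank-bound} gives $|E(F)|\le 8$ and $\operatorname{rk}(F)\le 4$. Hence every connected component of a contributing $F$ has at most five vertices, and the component ranks sum to at most $4$. Odd lengths contribute nothing, and a component of rank $r$ needs word length at least $2r$. Proposition~\ref{prop:exp-formula} then shows that only a few shapes can occur for disconnected $F$:
\begin{itemize}
\item $2K_2$, $3K_2$, $4K_2$;
\item $P_3$ or $K_3$ together with one or two copies of $K_2$;
\item a connected four-vertex graph (rank $3$) together with $K_2$;
\item two rank-$2$ components, i.e.\ $2P_3$, $P_3\sqcup K_3$, $2K_3$.
\end{itemize}
So the connected data needed are $c_j(F)$ for $j\in\{2,4,6,8\}$ and connected $F$ on at most five vertices. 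The values for $j\le 6$ are already in Theorem~\ref{thm:first-moments}. What remains is the $c_8$ values of the $9$ connected graphs on at most four vertices and the $21$ connected graphs on five vertices with at most $8$ edges. $K_5-e$ and $K_5$ are excluded because they have more than $8$ edges.

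\textbf{Computing connected coefficients.} I would follow the procedure of the remark after Theorem~\ref{thm:first-moments}. Let $W_8(H)$ be the number of identity words of length $8$ over an edge set $H$. Compute it by dynamic programming on $S_{V(F)}$ (at most $120$ states): iterate $8$ times the map sending the vector of word counts ending at each permutation to its right-multiplications by the transpositions of $H$. Then apply inclusion--exclusion over edge subsets,
\[
c_8(F)=\sum_{H\subseteq E(F)}(-1)^{|E(F)\setminus H|}W_8(H),
\]
which is finite and small, with at most $2^8$ subsets.

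\textbf{Checks and the disconnected entries.} The five-vertex connected graphs have genus zero at length $8$. This yields an independent check through Corollary~\ref{cor:hurwitz} with $n=5$:
\[
\sum_{[F]}c_8(F)\,\frac{5!}{|\operatorname{Aut}F|}=8!\cdot 25 .
\]
For small cases I would also verify entries by hand. For example, $c_8(K_{1,3})$ should be consistent with the star structure, and $c_8(P_3)=84$ can be derived from the relations $(ab)^3=1$, $a^2=b^2=1$ by counting words in two involutions generating $S_3$. The disconnected values follow from the binomial convolution in Proposition~\ref{prop:exp-formula}, for instance
\[
c_8(2K_3)=\binom84c_4(K_3)^2=70\cdot 144=10080,
\qquad
c_8(K_3\sqcup K_2)=\binom86\cdot180+\binom82\cdot12+\binom84\cdot 12 .
\]
A global consistency check compares $\sum_F c_8(F)N_F(G)$ with the trace of the eighth power of the regular representation for random small graphs.

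\textbf{Main obstacle.} The hard step is not conceptual but is the correct bookkeeping of the five-vertex connected supports. The names must match the graphs, and each graph's automorphism group must be correct for the Hurwitz check. Small labelling errors, for example between $D_2$ and $D_3$ or between the bull and the cricket, would not be caught by per-graph computation. That is why I would rely on the Hurwitz sum together with the full regular-spectrum comparison as the decisive verification.
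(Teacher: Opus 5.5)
Your proposal is correct and follows essentially the same route as the paper. You reduce via Theorem~\ref{thm:rank-bound} and Proposition~\ref{prop:exp-formula} to $c_j(F)$ for connected $F$ on at most five vertices. You then compute these by inclusion--exclusion over edge subsets with dynamic programming on $S_{V(F)}$, and validate with the genus-zero Hurwitz sum $8!\cdot 5^2$ and with direct identity-word counts on random small graphs.

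Two slips need fixing. First, your convolution for $c_8(K_3\sqcup K_2)$ has a spurious middle term. The split in which $K_3$ gets a word of length $2$ contributes $\binom82 c_2(K_3)c_6(K_2)=0$, because a length-$2$ word cannot use three edges. The correct sum is $\binom86\cdot180+\binom84\cdot12=5880$, whereas your expression evaluates to $6216$. Second, there are $19$, not $21$, connected five-vertex graphs with at most eight edges; $21$ is the total including $K_5-e$ and $K_5$.
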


\begin{proof}
By Theorem~\ref{thm:rank-bound} and Proposition~\ref{prop:exp-formula},
it suffices to know $c_j(F)$ for $j\le 8$ and connected $F$ with at most
five vertices.  These were computed by the procedure described in the
remark following Theorem~\ref{thm:first-moments}.  As consistency
checks: the five-vertex coefficients, weighted by $5!/|\operatorname{Aut}F|$,
add up to the Hurwitz number $8!\cdot 5^2=1\,008\,000$ of
Corollary~\ref{cor:hurwitz}; summing $c_8$ over all edge subsets of random
graphs with $5\le n\le 7$ reproduces the exact number of identity words
of length $8$; and the values $c_4,c_6$ from the same computation reproduce
Theorem~\ref{thm:first-moments}.
\end{proof}

\begin{remark}
The same computation in length $10$ gives, for connected
graphs on six vertices,
\[
 c_{10}(K_{1,5})=5040,\qquad c_{10}(P_6)=25520,\qquad c_{10}(C_6)=2100,
\]
and the values of $c_{2e}(T)$ for the trees $T$ with $e\le 5$ edges are
\[
\begin{array}{c|ccccccccc}
T & K_2&P_3&K_{1,3}&P_4&K_{1,4}&\mathrm{Ch}&P_5 \\\hline
c_{2e}(T)&1&4&30&42&336&608&816
\end{array}
\]
\[
\begin{array}{c|cccccc}
T & K_{1,5} & S_{4} & DS & S(3,1,1) & S(2,2,1) & P_6\\\hline
c_{10}(T)& 5040 & 11220 & 14480 & 18880 & 19680 & 25520
\end{array}
\]
Here $S_4$ is $K_{1,4}$ with one edge subdivided, $DS$ is the double
star with degree sequence $(3,3,1,1,1,1)$, and $S(a,b,c)$ is the spider
with legs of lengths $a,b,c$.
\end{remark}

\begin{conjecture}[Genus-zero closed formulae]
\label{conj:closed}
\begin{enumerate}[(i)]
\item For stars, $c_{2e}(K_{1,e})=\dfrac{(2e)!}{(e+1)!}=e!\,\mathrm{Cat}_e$.
\item For cycles, $c_{2(v-1)}(C_v)=v(v-1)^2\,\mathrm{Cat}_{v-2}$.
\item For a tree $T$, a word of length $2|E(T)|$ in the edges of $T$
using every edge has product $1$ if and only if every edge occurs exactly
twice and, for every vertex $x$ of $T$, the subword formed by the edges
incident to $x$ is a non-crossing double occurrence word.
\end{enumerate}
\end{conjecture}

We verified (i) for $e\le5$, (ii) for $v\le6$, and (iii) for all trees
with at most $5$ edges.  Part (iii) implies (i), since for a star the
condition says that the double occurrence word is non-crossing; there
are $\mathrm{Cat}_e$ non-crossing pairings and $e!$ labellings.  Minimal
transitive factorizations into star transpositions have been studied by
Goulden--Jackson and Irving--Rattan \cite{GouldenJackson09,IrvingRattan},
and (i) should follow from their methods.

\begin{remark}
The correction term $12N_{K_3}(G)$ in the fourth moment is important.
It reflects the relation $(ij)(jk)(ij)=(ik)$ inside a triangle.  Thus
triangle effects begin at order four, not order six.  The sixth moment is
still the first place where the next family of local supports, such as
claws, four-vertex paths and four-cycles, enters the regular trace in a
substantial way.
\end{remark}

\section{Three exactly solvable families}
\label{sec:solvable-families}

We next record three classes of graphs for which the transposition
Cayley spectrum can be described explicitly.  These examples answer part
of the general program in closed form and show that the construction is
not merely a source of low-order moment identities.

\subsection{The complete graph}

Let
\[
 C_n=\sum_{1\leq i<j\leq n}(ij)
\]
be the full class sum of all transpositions.  This is a central element
of $\mathbb C[S_n]$.  Its spectral decomposition is the standard
character-theoretic decomposition of the random-transposition walk on
$S_n$; see, for example, Diaconis--Shahshahani and Diaconis
\cite{DiaconisShahshahani,Diaconis}.  Hence, by Schur's lemma, it acts
as a scalar on each irreducible representation $V_\lambda$.

\begin{proposition}[Complete transposition graph]
\label{prop:complete-graph-spectrum}
For $G=K_n$, the eigenvalue of $A_G=C_n$ on the irreducible
representation $V_\lambda$ is
\[
 \kappa_\lambda
 =
 \sum_{(a,b)\in\lambda}(b-a),
\]
the sum of the contents of all boxes of the Young diagram $\lambda$.
Equivalently,
\[
 \kappa_\lambda
 =
 \frac12\sum_i \lambda_i(\lambda_i-2i+1).
\]
In the regular representation this eigenvalue occurs with multiplicity
$(f^\lambda)^2$.
\end{proposition}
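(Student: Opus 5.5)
Since $C_n$ is the sum over a full conjugacy class, it commutes with every permutation and is therefore central in $\mathbb C[S_n]$. By Schur's lemma it acts on each irreducible $V_\lambda$ as a scalar $\kappa_\lambda$. To evaluate this scalar I take traces: on one side $\operatorname{tr}\rho_\lambda(C_n)=f^\lambda\kappa_\lambda$, and on the other side the trace equals $\binom n2\chi^\lambda((12))$. This gives
\[
\kappa_\lambda=\binom n2\frac{\chi^\lambda((12))}{f^\lambda}.
\]
The proof then needs two ingredients. The first is the identification of this central character value with the content sum. The second is the multiplicity count in the regular representation.

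For the content sum I would use the Jucys--Murphy elements $X_k=\sum_{i<k}(ik)$, so that $C_n=X_2+\cdots+X_n$. In Young's orthogonal (or seminormal) basis indexed by $\operatorname{SYT}(\lambda)$, each $X_k$ acts diagonally. On the basis vector $v_T$ its eigenvalue is the content $b-a$ of the box containing $k$ in $T$. Summing over $k$ shows that every $v_T$ has eigenvalue $\sum_{(a,b)\in\lambda}(b-a)$, which is independent of $T$, consistent with Schur's lemma. The diagonality of the $X_k$ on the Young basis is the one genuinely nontrivial input. I would cite it as standard (Jucys, Murphy, Okounkov--Vershik) rather than reprove it.

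A self-contained alternative would be induction on $n$ via the branching rule. Since $X_n=C_n-C_{n-1}$ commutes with $S_{n-1}$, it acts as a scalar on each $S_{n-1}$-isotypic component of $V_\lambda$, and one identifies that scalar as $\kappa_\lambda-\kappa_{\lambda^-}$ with the removed box. This still requires the classical formula for $\chi^\lambda((12))/f^\lambda$ (Frobenius), so I regard that identification as the main obstacle and would simply quote it.

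Next, the equivalent closed form. Row $i$ of $\lambda$ has boxes with contents $b-i$ for $b=1,\dots,\lambda_i$. Their sum is
\[
\frac{\lambda_i(\lambda_i+1)}2-i\lambda_i=\frac12\lambda_i(\lambda_i-2i+1),
\]
and summing over $i$ gives the second formula.

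Finally, the multiplicity. The regular representation contains $V_\lambda$ with multiplicity $f^\lambda$, so the scalar $\kappa_\lambda$ contributes $f^\lambda\cdot f^\lambda=(f^\lambda)^2$ eigenvalues. Distinct $\lambda$ may share the same $\kappa_\lambda$, in which case the multiplicities of that eigenvalue add. The statement should therefore be read as saying that the contribution of each $\lambda$ to the multiplicity is $(f^\lambda)^2$.
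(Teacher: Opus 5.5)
Your proof is correct and follows the paper's route: centrality and Schur's lemma, the decomposition $C_n=J_2+\cdots+J_n$ into Jucys--Murphy elements acting diagonally by contents in the Young basis, and the multiplicity $f^\lambda\cdot f^\lambda$ from the regular representation. Your row-by-row derivation of the closed form is right, and so is your caveat that distinct $\lambda$ can share the same $\kappa_\lambda$ (e.g.\ $(4,1,1)$ and $(3,3)$ for $n=6$ both give $3$); this makes $(f^\lambda)^2$ the contribution of each $\lambda$ rather than the total multiplicity, a point the paper's statement leaves imprecise.
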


\begin{proof}
The element $C_n$ is central.  Its scalar on $V_\lambda$ is the usual
central character of the transposition class.  In the Young--Jucys--
Murphy realization one has
\[
 C_n=J_2+J_3+\cdots+J_n,
 \qquad
 J_r=\sum_{i<r}(ir).
\]
The element $J_r$ acts diagonally in the Young orthogonal basis, with
eigenvalue equal to the content of the box containing $r$.  Summing over
$r$ gives the sum of all contents of $\lambda$.  The multiplicity
$(f^\lambda)^2$ follows from the decomposition of the regular
representation.
\end{proof}

\subsection{Matchings}

Suppose that $G$ is a matching with $m$ edges.  Then the corresponding
transpositions commute.  Thus $A_G$ is a sum of $m$ commuting involutions,
and its spectrum is completely elementary.

\begin{proposition}[Matching case]
\label{prop:matching-spectrum}
If $G=mK_2$ is a matching with $m$ edges on $n$ vertices, then
\[
 \operatorname{Spec}(\Gamma(G))
 =
 \left\{\,m-2r:\;0\leq r\leq m\,\right\},
\]
where the eigenvalue $m-2r$ occurs with multiplicity
\[
 n!\,2^{-m}\binom mr.
\]
\end{proposition}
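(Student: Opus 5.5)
Since the $m$ transpositions $\tau_1,\dots,\tau_m$ attached to the edges of the matching have pairwise disjoint supports, they commute and generate an elementary abelian subgroup $H=\langle\tau_1,\dots,\tau_m\rangle\cong(\mathbb Z/2)^m$ of $S_n$. My plan is to diagonalize left multiplication by $A_G=\tau_1+\cdots+\tau_m$ on $\mathbb C[S_n]$ by restricting the regular representation of $S_n$ to $H$.

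The first step is to show that the regular representation of $S_n$, restricted to $H$, is a multiple of the regular representation of $H$. Decompose $S_n$ into right cosets $H\sigma$. Left multiplication by $H$ preserves each subspace $\mathbb C[H\sigma]$, and $\mathbb C[H\sigma]$ is isomorphic to $\mathbb C[H]$ via $h\sigma\mapsto h$. Hence $\mathbb C[S_n]\simeq \mathbb C[H]^{\oplus [S_n:H]}$ as $H$-modules, with $[S_n:H]=n!/2^m$. The same conclusion can also be read off directly from the Cayley graph: $\Gamma(G)$ is a disjoint union of $n!/2^m$ copies of the Cayley graph of $(\mathbb Z/2)^m$ with respect to its standard generators, which is the hypercube $Q_m$.

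The second step is to diagonalize $\mathbb C[H]$. It splits into the $2^m$ characters $\chi_\epsilon$, with $\epsilon\in\{\pm1\}^m$ and $\chi_\epsilon(\tau_i)=\epsilon_i$. On the line spanned by the idempotent for $\chi_\epsilon$, the element $A_G$ acts by the scalar $\sum_i\epsilon_i=m-2r$, where $r$ is the number of indices with $\epsilon_i=-1$. The number of such $\epsilon$ is $\binom mr$. This is just the classical hypercube spectrum.

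Multiplying by the number of cosets, the eigenvalue $m-2r$ occurs with multiplicity $n!\,2^{-m}\binom mr$, as claimed. As a check, summing over $r$ gives $n!$. There is no real obstacle in this argument. The only point needing care is that the coset decomposition must use right cosets $H\sigma$, so that they are invariant under left multiplication. I will also note that $H$ really has order $2^m$, which holds because the supports are disjoint, so no product of distinct $\tau_i$ is trivial.
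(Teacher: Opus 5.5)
Your proposal is correct and follows essentially the same route as the paper: restrict to the elementary abelian subgroup $(\mathbb Z/2)^m$, read off the eigenvalue $m-2r$ on each character with $r$ negative signs, and multiply $\binom mr$ by the multiplicity $n!/2^m$ of each character in the regular representation. Your right-coset decomposition (equivalently, $\Gamma(G)$ as $n!/2^m$ copies of the hypercube $Q_m$) supplies the justification of that last multiplicity, which the paper simply asserts.
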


\begin{proof}
The subgroup generated by the $m$ disjoint transpositions is
$(\mathbb Z/2\mathbb Z)^m$.  In a simultaneous eigenbasis for these
commuting involutions, each transposition contributes an eigenvalue
$\pm1$.  If exactly $r$ of the signs are negative, the sum is $m-2r$,
and there are $\binom mr$ such sign choices.  Each character of this
elementary abelian subgroup occurs in the regular representation of
$S_n$ with multiplicity $n!/2^m$.
\end{proof}

\subsection{Stars and a Jucys--Murphy element}

The first noncentral but still completely diagonalizable family is given
by stars.  This family is the classical star graph on $S_n$; its spectrum
and eigenvalue multiplicities have been studied, for instance, by
Krakovski--Mohar and by Khomyakova--Konstantinova
\cite{KrakovskiMohar,KhomyakovaKonstantinova}.  Let $G=K_{1,n-1}$ be the
star with centre $n$.  Then
\[
 A_G=\sum_{i=1}^{n-1}(in)=J_n,
\]
the last Young--Jucys--Murphy element.

\begin{theorem}[Star spectrum]
\label{thm:star-spectrum}
Let $G=K_{1,n-1}$.  Then the eigenvalues of $A_G$ are the possible
contents
\[
 c(b)=\operatorname{column}(b)-\operatorname{row}(b)
\]
of removable boxes $b$ of Young diagrams $\lambda\vdash n$.
More precisely, in the copy of $V_\lambda$ with Young orthogonal basis,
$A_G$ is diagonal and the basis vector indexed by a standard tableau $T$
has eigenvalue equal to the content of the box occupied by $n$ in $T$.
Thus the multiplicity of an integer $c$ in the regular representation is
\[
 \sum_{\lambda\vdash n}
 f^\lambda\,
 \#\{\,T\in \operatorname{SYT}(\lambda):
      c(\operatorname{box}_T(n))=c\,\}.
\]
\end{theorem}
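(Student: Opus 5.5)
The plan is to reduce everything to the Jucys--Murphy calculus already invoked in the proof of Proposition~\ref{prop:complete-graph-spectrum}. With the star centred at $n$, one has $A_G=\sum_{i<n}(in)=J_n$. The key fact I will use is the standard property of the Young orthogonal (seminormal) basis. For each $\lambda\vdash n$, the basis vectors $v_T$, $T\in\operatorname{SYT}(\lambda)$, are simultaneous eigenvectors of $J_2,\dots,J_n$. Moreover $J_r v_T=c(\operatorname{box}_T(r))\,v_T$. I will cite this exactly as it was cited for the complete graph, or sketch it as follows. Put $C_r=J_2+\cdots+J_r$, the class sum of transpositions in $S_r$, which is central in $\mathbb C[S_r]$. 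The branching $S_1\subset S_2\subset\cdots\subset S_n$ is multiplicity free, so each $v_T$ spans the one-dimensional space cut out by the chain of shapes $\lambda^{(1)}\subset\cdots\subset\lambda^{(n)}=\lambda$ determined by $T$. On that space $C_r$ acts by the content sum $\kappa_{\lambda^{(r)}}$ from Proposition~\ref{prop:complete-graph-spectrum}. Hence $J_r=C_r-C_{r-1}$ acts by $\kappa_{\lambda^{(r)}}-\kappa_{\lambda^{(r-1)}}$, which is the content of the box added at step $r$, namely $\operatorname{box}_T(r)$.

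Taking $r=n$ shows that $\rho_\lambda(A_G)$ is diagonal in the basis $\{v_T\}$, with eigenvalue equal to the content of the box containing $n$ in $T$. This is the ``more precisely'' clause. Next I identify which contents occur. In a standard tableau, the entry $n$ is always in a removable (outer corner) box of $\lambda$. Conversely, every removable box $b$ of $\lambda$ is occupied by $n$ in some $T$: fill $\lambda\setminus b$ by any standard tableau and put $n$ in $b$. So the set of eigenvalues is exactly the set of contents of removable boxes of partitions $\lambda\vdash n$.

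For the multiplicity formula I will use the decomposition $\mathbb C[S_n]\simeq\bigoplus_\lambda f^\lambda V_\lambda$ recalled in Section~2. Left multiplication by $A_G$ acts on each of the $f^\lambda$ copies of $V_\lambda$ as $\rho_\lambda(A_G)$. So the multiplicity of $c$ in $\operatorname{Spec}\Gamma(G)$ equals $\sum_\lambda f^\lambda\cdot\#\{T:c(\operatorname{box}_T(n))=c\}$, which is the stated formula.

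The only substantive step is the diagonality of $J_n$ in the Young basis, that is, the branching and centrality argument above. Everything after it is bookkeeping. The point I would check most carefully is that multiplicity-freeness of the restriction $S_{r}\downarrow S_{r-1}$ really gives one-dimensional joint eigenspaces, so that the $v_T$ are well defined up to scalar and diagonalize all the $C_r$ at once.
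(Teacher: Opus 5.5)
Your proposal is correct and follows the paper's proof. You identify $A_G$ with the Jucys--Murphy element $J_n$, use that $J_n$ is diagonal in the Young orthogonal basis with eigenvalue the content of $\operatorname{box}_T(n)$, and multiply by $f^\lambda$; your explicit check that $n$ occupies exactly the removable boxes is a useful addition that the paper leaves implicit.

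One caveat: in your sketch of the diagonality, take the scalar $\kappa_\lambda$ of the transposition class sum from the classical Frobenius character formula, not from the paper's proof of Proposition~\ref{prop:complete-graph-spectrum}. That proof itself derives $\kappa_\lambda$ from the diagonality of the $J_r$, so relying on it would make your argument circular.
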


\begin{proof}
After choosing the centre of the star to be $n$, the operator $A_G$ is
exactly the Young--Jucys--Murphy element $J_n=\sum_{i<n}(in)$.  The
Young--Jucys--Murphy elements are diagonal in the Young orthogonal basis,
and $J_n$ acts on a standard tableau $T$ by the content of the box
containing $n$.  The stated multiplicity again follows by multiplying
the multiplicity inside $V_\lambda$ by the multiplicity $f^\lambda$ of
$V_\lambda$ in the regular representation.
\end{proof}

\begin{remark}
The star theorem gives a genuine solution of one nontrivial instance of
the inverse problem.  Among stars, the Cayley spectrum determines the
number of leaves, hence the graph.  It also shows why the present
problem is naturally related to the Gelfand--Tsetlin and
Young--Jucys--Murphy approach to the representation theory of symmetric
groups.
\end{remark}

\begin{corollary}[A solved family of reconstruction examples]
\label{cor:solved-families}
The reconstruction problem has an affirmative answer inside each of the
following families:
\[
 \{K_n\}_{n\geq1},\qquad
 \{mK_2\}_{m\geq0},\qquad
 \{K_{1,n-1}\}_{n\geq1}.
\]
In each case the transposition Cayley spectrum determines the parameter
of the family and hence the graph.
\end{corollary}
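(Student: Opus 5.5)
Within each family, some quantity read off from the spectrum should be strictly monotone in the parameter; once that is shown the spectrum fixes the parameter, and hence the graph.

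The most uniform choice is $M_2(G)=n!\,|E(G)|$ from Theorem~\ref{thm:first-moments}, which is determined by the spectrum. First I would recover $n$ from the number of eigenvalues, $|S_n|=n!$, counted with multiplicity. Given $n$, I then read off $|E(G)|=M_2/n!$.

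\textbf{Complete graphs.} Here $n$ alone determines $K_n$. As a second check, the largest eigenvalue is $\binom n2$, which is the content sum of the one-row partition in Proposition~\ref{prop:complete-graph-spectrum}.

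\textbf{Stars.} Here $n$ determines $K_{1,n-1}$ directly. Alternatively, the largest eigenvalue is $n-1$, the content of the removable box of $(n)$ in Theorem~\ref{thm:star-spectrum}.

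\textbf{Matchings.} There is one subtlety: the graph $mK_2$ as a graph on $[n]$ may carry $n-2m$ isolated vertices. If the family is read literally as $\{mK_2\}$ with no extra vertices, then $n=2m$ is recovered from the number of eigenvalues. If isolated vertices are allowed, I recover $n$ from the eigenvalue count and $m$ from $M_2/n!$. Alternatively, $m$ is the largest eigenvalue in Proposition~\ref{prop:matching-spectrum}. Either way the graph $mK_2\sqcup(n-2m)K_1$ is determined.

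The degenerate cases $K_1$, $0K_2$ and $K_{1,0}$ all have spectrum $\{0\}$ on a single permutation, so they are consistent with the claim.

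The only step needing care is recovering $n$. The spectrum is a multiset whose total size is $n!$, and $n\mapsto n!$ is injective for $n\ge1$, with the single exception $0!=1!$. That case is the empty graph on no vertices and lies outside the families as indexed. Everything else follows directly from the results already stated, so I do not expect a genuine obstacle.
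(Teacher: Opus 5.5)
Your argument is correct, but it takes a different route from the one the paper has in mind. The paper gives no separate proof. It presents the corollary as a consequence of the explicit diagonalizations in Proposition~\ref{prop:complete-graph-spectrum}, Proposition~\ref{prop:matching-spectrum} and Theorem~\ref{thm:star-spectrum}, reading the parameter off the computed spectrum. For example, it uses the top eigenvalue $\binom n2$, $m$ or $n-1$, as in the remark that among stars the spectrum determines the number of leaves.

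You instead observe that each family contains at most one graph for each value of $n$, or of $(n,m)$ if isolated vertices are allowed. So you need only the total multiplicity $n!$ and, in the isolated-vertex reading, $M_2/n!=m$. This is more elementary. It also makes plain that, as a within-family statement, the corollary does not use exact solvability at all; what the paper's framing buys is the exact spectra themselves, not a stronger reconstruction.

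If you want a statement that genuinely exploits spectral information, the paper's general results give more. Each of these graphs is determined by $\operatorname{Spec}\Gamma(G)$ among \emph{all} graphs on $n$ vertices:
\begin{itemize}
\item For $K_n$: $m=\binom n2$ forces $K_n$.
\item For $mK_2\sqcup(n-2m)K_1$: Theorem~\ref{thm:connectedness-test} forces $\prod_C|C|!=2^m$. Since $3$ divides $s!$ for $s\ge3$, every component has at most two vertices, and exactly $m$ have two.
\item For $K_{1,n-1}$: connectedness with $n-1$ edges gives a tree. Then $6N_{K_3}-N_{P_3}=-\binom{n-1}{2}$ (Corollary~\ref{cor:fourth-moment-invariant}) says every two edges meet, so the tree is a star.
\end{itemize}

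One minor slip. The coincidence $0!=1!$ does not lie outside the families as indexed, since $0K_2$ read literally is the null graph. This is harmless when $n=2m$. Under the isolated-vertex reading, the only collision is the null graph versus $K_1$, a degenerate case the statement tacitly ignores.
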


The three examples above indicate that the general problem interpolates
between three regimes: the central character theory of the complete
graph, the elementary Fourier theory of an abelian subgroup for
matchings, and the noncentral but diagonal Young--Jucys--Murphy theory
for stars.  This gives a conceptual explanation for why the general
operator $A_G$ should be viewed as a graph-dependent deformation of the
Young--Jucys--Murphy elements.

\section{The reconstruction problem}

The main conjectural direction is the following.

\begin{conjecture}[Reconstruction conjecture]
\label{conj:reconstruction}
If
\[
\operatorname{Spec}(\Gamma(G_1))
=
\operatorname{Spec}(\Gamma(G_2)),
\]
then
\[
G_1\cong G_2.
\]
\end{conjecture}

The corresponding statement for ordinary graph spectra is false,
since there exist many pairs of cospectral nonisomorphic graphs.
However, the spectrum of $\Gamma(G)$ is expected to be substantially
more rigid due to the interaction of the transpositions inside
the representation theory of $S_n$.  Conjecture~\ref{conj:reconstruction}
has now been verified by computer for all graphs with at most nine
vertices; see Section~\ref{sec:computations}.

\subsection{Connectedness and component-size information}

The reconstruction conjecture is difficult, but one global part of it can
be settled immediately.  The Cayley spectrum detects whether the original
graph is connected.  In fact it detects a little more: it determines the
index in $S_n$ of the subgroup generated by the edge-transpositions.

\begin{theorem}[Connectedness test]
\label{thm:connectedness-test}
Let $G$ be a graph on $n$ vertices with connected components of sizes
\[
 s_1,\ldots,s_r.
\]
Let $m=|E(G)|$.  Then the multiplicity of the eigenvalue $m$ in the
spectrum of $\Gamma(G)$ is
\[
 \frac{n!}{s_1!s_2!\cdots s_r!}.
\]
Consequently the spectrum of $\Gamma(G)$ determines whether $G$ is
connected.
\end{theorem}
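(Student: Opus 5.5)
The plan is to compute the multiplicity of the top eigenvalue $m$ as the dimension of the space of vectors fixed by every edge-transposition. Each $\tau_e$ acts by left multiplication on $\mathbb C[S_n]$ as a unitary involution (a permutation matrix), so its eigenvalues are $\pm1$. Now take $x\in\mathbb C[S_n]$ with $A_Gx=mx$ and write $x=\sum_e x_e^++x_e^-$-type decompositions edge by edge. Taking the inner product with $x$ gives
\[
m\|x\|^2=\sum_{e\in E(G)}\langle \tau_e x,x\rangle ,
\]
and each term satisfies $\langle\tau_e x,x\rangle\le\|x\|^2$, with equality exactly when $\tau_e x=x$. Hence the $m$-eigenspace of $A_G$ is precisely the common fixed space of all $\tau_e$, $e\in E(G)$. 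Conversely, every common fixed vector is clearly an eigenvector with eigenvalue $m$. (Equivalently, one may use that $\Gamma(G)$ is $m$-regular, so the multiplicity of $m$ equals the number of connected components of $\Gamma(G)$.)

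Next I identify this fixed space. A vector fixed under left multiplication by each generator is fixed by the subgroup $H=\langle T_G\rangle\le S_n$. Thus it is a function on $S_n$ that is constant on right cosets $H\sigma$. Its dimension is therefore the index $[S_n:H]$, which is also the number of components of the Cayley graph. It then remains to identify $H$. Transpositions along the edges of a connected graph on a vertex set $V_i$ generate the full symmetric group $S_{V_i}$: by induction on a spanning tree, conjugating $(ij)$ by $(jk)$ yields $(ik)$, so all transpositions inside a component are obtained. Transpositions from different components commute and act on disjoint sets. Hence
\[
H=S_{V_1}\times\cdots\times S_{V_r}, \qquad [S_n:H]=\frac{n!}{s_1!\cdots s_r!}.
\]

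For the consequence, note that the spectrum determines $m$, for instance as its largest eigenvalue, or via $M_2$. It therefore also determines the multiplicity of $m$. This multiplicity equals $1$ if and only if $r=1$; indeed, if $r\ge2$ with all $s_i\ge1$, the multinomial coefficient exceeds $1$. One caveat is the degenerate case $m=0$, where every eigenvalue equals $0$ with multiplicity $n!$; the formula still holds there, since all $s_i=1$. The main point requiring care is the equality case in the Cauchy--Schwarz step, which shows that an $m$-eigenvector must be fixed by each $\tau_e$ individually and not merely by their sum. The regular-graph component-count argument avoids this issue entirely, so I would present that version.
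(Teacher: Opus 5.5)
Your proof is correct and is essentially the paper's argument. The multiplicity of $m$ is the number of components of the $m$-regular graph $\Gamma(G)$, which is the index of $H=\langle T_G\rangle=S_{V_1}\times\cdots\times S_{V_r}$; your Cauchy--Schwarz fixed-space computation is just a self-contained proof, carried out in $\mathbb C[S_n]$, of the standard regular-graph fact the paper quotes. Your description of the components as the cosets $H\sigma$ is the accurate one for left multiplication; the paper calls them ``left cosets'', which is harmless because only the index $[S_n:H]$ matters.
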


\begin{proof}
The Cayley graph $\Gamma(G)$ is $m$-regular.  For a finite undirected
$m$-regular graph, the multiplicity of the eigenvalue $m$ is equal to the
number of connected components of that graph.  The connected components
of a Cayley graph are the left cosets of the subgroup generated by its
connection set.  In the present case this subgroup is generated by all
transpositions corresponding to edges of $G$.

If $C_1,\ldots,C_r$ are the connected components of $G$, then the edge
transpositions generate
\[
 S_{C_1}\times S_{C_2}\times\cdots\times S_{C_r}
 \subseteq S_n,
\]
because transpositions along the edges of a connected graph generate the
full symmetric group on its vertex set.  Therefore the number of connected
components of $\Gamma(G)$ is the index
\[
 [S_n:S_{C_1}\times\cdots\times S_{C_r}]
 =\frac{n!}{s_1!s_2!\cdots s_r!}.
\]
In particular this multiplicity is equal to $1$ if and only if $r=1$,
which is exactly the connectedness of $G$.
\end{proof}

\begin{corollary}[Necessary conditions for Cayley cospectrality]
\label{cor:necessary-conditions}
If $\Gamma(G_1)$ and $\Gamma(G_2)$ are cospectral, then $G_1$ and $G_2$
have the same number of vertices, the same number of edges, the same
value of $6N_{K_3}-N_{P_3}$, and the same value of
\[
 \prod_C |C|!,
\]
where the product is over the connected components $C$ of the graph.  In
particular, either both $G_1,G_2$ are connected or neither is connected.
If they are connected, they also have the same algebraic connectivity
$a(G_1)=a(G_2)$, by Corollary~\ref{cor:aldous}.
\end{corollary}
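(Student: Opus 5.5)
The statement is a packaging of earlier results, so the plan is to read each invariant off the spectrum of $\Gamma(G_1)=\Gamma(G_2)$ in turn. In every case the reasoning is the same: it is a function of the spectrum, hence equal for both graphs.

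\emph{Number of vertices.} The number of eigenvalues counted with multiplicity is $|S_n|=n!$, and $n\mapsto n!$ is injective for $n\ge1$. The only exception is $0!=1!=1$. In that case both graphs have at most one vertex and no edges, so the question is trivial. I will fix this convention at the start.

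\emph{Number of edges.} The graph $\Gamma(G)$ is $m$-regular, so $m$ is its largest eigenvalue. Equally, $M_2(G)/n!=N_{K_2}(G)=m$ by Theorem~\ref{thm:first-moments}. Since $n$ is already known, $m$ is determined.

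\emph{The invariant $6N_{K_3}-N_{P_3}$.} This is exactly Corollary~\ref{cor:fourth-moment-invariant}. The moment $M_4=\operatorname{tr}(A_G^4)$ is a spectral invariant, and once $n$ and $m$ are known the relation
\[
\frac{M_4}{n!}=m+6\binom m2-2P+12T
\]
recovers $6T-P$.

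\emph{Connectivity data.} By Theorem~\ref{thm:connectedness-test}, the multiplicity of the eigenvalue $m$ (now known) equals $n!/\prod_C|C|!$. Since $n$ is known, $\prod_C|C|!$ is determined. If $G$ is connected this product is $n!$. If $G$ is disconnected, every component has fewer than $n$ vertices and $\prod_C|C|!<n!$, because $a!\,b!<(a+b)!$ for $a,b\ge1$. So connectedness agrees for the two graphs, which is the dichotomy stated. Isolated vertices count as components of size $1$, which contribute a factor $1$; this is consistent.

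\emph{Algebraic connectivity.} In the connected case, Corollary~\ref{cor:aldous} gives $\lambda_2(\Gamma(G))=m-a(G)$. Because $m$ is known, $a(G)$ is determined.

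There is no genuine obstacle here. The only points that need care are the degenerate cases $n\le1$ in recovering $n$ from $n!$, and the strict inequality $\prod_C|C|!<n!$ for disconnected graphs.
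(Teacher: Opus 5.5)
Your proposal is correct and follows the paper's proof: $n$ from the total multiplicity $n!$, $m$ and $6N_{K_3}-N_{P_3}$ from Corollary~\ref{cor:fourth-moment-invariant}, $\prod_C|C|!$ from the multiplicity of the top eigenvalue in Theorem~\ref{thm:connectedness-test}, and $a(G)$ from Corollary~\ref{cor:aldous}. Your extra remarks on the degenerate case $0!=1!$ and on the strict inequality $\prod_C|C|!<n!$ for disconnected graphs only make explicit what the paper leaves implicit; the latter point is also stated at the end of the proof of Theorem~\ref{thm:connectedness-test}.
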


\begin{proof}
The number of vertices is determined by the size $n!$ of the Cayley
matrix, hence by the total multiplicity of the spectrum.  The number of
edges and the invariant $6N_{K_3}-N_{P_3}$ are determined by
Corollary~\ref{cor:fourth-moment-invariant}.  The product of factorials of
component sizes is determined from Theorem~\ref{thm:connectedness-test},
since the multiplicity of the top eigenvalue is
$n!/\prod_C |C|!$.
\end{proof}

\section{Relation with the CayleyPy programme}

The CayleyPy papers suggest a useful way to sharpen the computational
side of the present project.  Their main focus is not the inverse problem
studied here, but rather the efficient exploration of very large Cayley
and Schreier graphs, shortest paths, diameters, growth vectors, random
walk behaviour, and experimentally generated conjectures
\cite{ChervovCube,ChervovRL,ChervovGrowth,ChervovHolography}.  Still,
there is a clear intersection at the level of spectra.  The CayleyPy-RL
paper reports numerical spectral-distribution phenomena for the
``LRX'' Cayley graphs of $S_n$, generated by a cyclic shift and a
transposition \cite{ChervovRL}.  The CayleyPy-Growth paper asks, among
other general questions, what can be said about spectra of Cayley graphs
and provides a large experimental framework for computing growth and
related statistics \cite{ChervovGrowth}.

The present paper is narrower but more algebraic.  We do not study an
arbitrary generating set of $S_n$; instead the generating set is the edge
set of an arbitrary graph $G$, viewed as a set of transpositions.  This
restriction makes it possible to express the regular spectral moments of
$\Gamma(G)$ as universal linear combinations of small subgraph counts in
$G$.  Thus the two approaches should be regarded as complementary.  The
CayleyPy framework is a natural tool for testing the reconstruction
question, for producing non-isomorphic candidates with equal low-order
moments, and for comparing growth or diameter data with the spectral
moments studied here.  Conversely, the support expansion in this note
supplies exact invariants which could be used as test cases or symbolic
benchmarks for CayleyPy computations.

\section{Computational results}
\label{sec:computations}

\subsection{The CayleyPy computation for $n\le 9$}

Following item~(1) of the list of finite tests in the previous version
of this paper, I.~Koltsov of the CayleyPy project computed the refined
spectra of all graphs with $n\le 9$ \cite{KoltsovPC}.  The datasets of
non-isomorphic graphs were generated with \texttt{nauty} (from
B.~McKay's tables), and the matrices $A_G^\lambda$ were built in Young's
orthogonal form.  This avoids the $n!\times n!$ regular representation.
His findings are as follows.
\begin{enumerate}[(1)]
\item For all $n\le 9$ (for $n=9$ there are $274\,668$ graphs), the
refined spectra computed to eight decimal places show no collisions.
Thus $G\mapsto\{\operatorname{Spec}A_G^\lambda\}_{\lambda\vdash n}$ is
injective on isomorphism classes for $n\le 9$.
\item There are no collisions either for the combined spectrum, i.e.\
the multiset of eigenvalues with multiplicities, with the labels
$\lambda$ forgotten.  This combined spectrum is exactly
$\operatorname{Spec}\Gamma(G)$.  Hence
\emph{Conjecture~\ref{conj:reconstruction} holds for all graphs with at
most nine vertices.}
\item If multiplicities are also discarded, so that only the support of
the spectrum (the set of distinct eigenvalues) is kept, collisions do
occur.  There are few among connected graphs and considerably more among
all graphs.
\item The YOF pipeline was cross-checked against direct diagonalization
of the $n!\times n!$ regular adjacency matrix for all graphs with
$n\le 6$, with identical sets of distinct eigenvalues.
\end{enumerate}

\subsection{An independent verification}

We independently checked these findings with separate code, using
Young's orthogonal form built from the axial-distance formula and graphs
generated by \texttt{geng} \cite{McKayPiperno}.  The implementation was
validated against brute-force regular spectra for $n\le5$, and against
the identities $s_i^2=1$, the braid relations and
$\sum_\lambda(f^\lambda)^2=n!$ for $n\le7$.  The results are summarized
in Table~\ref{tab:census}.

\begin{table}[h]
\[
\begin{array}{c|r|cccccc|c|c}
n & \#\text{graphs} &
\multicolumn{6}{c|}{\#\text{classes of }(M_2,\dots,M_{2r})} &
\text{support coll.} & \text{adj.\ cospectral}\\
 & & r=1 & r=2 & r=3 & r=4 & r=5 & & \text{(all / conn.)} & \text{graphs}\\\hline
5 & 34 & 11 & 32 & 34 & & & & 4\ /\ 2 & 2\\
6 & 156 & 16 & 111 & 156 & & & & 23\ /\ 8 & 10\\
7 & 1044 & 22 & 341 & 1022 & 1044 & & & 89\ /\ 39 & 110\\
8 & 12346 & 29 & 856 & 10118 & 12341 & 12346 & & 343\ /\ 156 & 1722\\
9 & 274668 & 37 & 7308 & 234228 & 274642 & 274668 & & \text{--} & 51039\\
\end{array}
\]
\caption{Separation of graphs by Cayley moments.  The $r$-th column gives
the number of distinct values of $(M_2,\dots,M_{2r})$; the graphs are
separated once this equals the number of graphs.  ``Support coll.'' is
the number of graphs (all, resp.\ connected) whose set of distinct
Cayley eigenvalues coincides with that of a non-isomorphic graph.  The
last column gives the number of graphs having an adjacency-cospectral
mate.}
\label{tab:census}
\end{table}

\begin{theorem}
\label{thm:computer}
\textup{(Computer-assisted.)}  For $n\le 9$, non-isomorphic graphs on
$n$ vertices have distinct moment vectors $(M_2,M_4,\dots,M_{10})$.  In
particular their transposition Cayley graphs are not cospectral, which
confirms Conjecture~\ref{conj:reconstruction} for $n\le 9$.  The minimal
number of even moments needed is $3$ for $n=5,6$, $4$ for $n=7$, and $5$
for $n=8,9$.
\end{theorem}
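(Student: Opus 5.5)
This is a computer-assisted statement, so the plan is to describe an exact, reproducible computation built from the structural results above. I would not diagonalize $n!\times n!$ matrices. Instead I would use the universal support expansion together with Theorem~\ref{thm:rank-bound} and Proposition~\ref{prop:exp-formula}. The key observation is that $M_{2r}(G)/n!=\sum_F c_{2r}(F)N_F(G)$ is an exact integer. The coefficients $c_j(F)$ are needed only for $j\le 10$ and connected $F$ with at most six vertices. I would compute these by the inclusion--exclusion/dynamic-programming procedure described after Theorem~\ref{thm:first-moments}. They would be cross-checked against Theorem~\ref{thm:first-moments}, Theorem~\ref{thm:eighth}, and the Hurwitz identity of Corollary~\ref{cor:hurwitz} for $n\le 6$.

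\textbf{Step 1: exact moments.} For each $n\le 9$ I would enumerate the non-isomorphic graphs with \texttt{geng}. For each $G$ I would compute $W_{2r}(E(G))=[1]A_G^{2r}$ for $r\le 5$ in exact integer arithmetic. There are two ways to do this. The first is the subgraph-count expansion, using connected components via the exponential formula. The second, simpler route is a walk count. One counts closed words of length $2r$ from the identity by dynamic programming over the ball of radius $5$ around $1$ in $\Gamma(G)$. This needs at most $\sum_{j\le5}\binom{36}{j}$-sized frontiers, and the frontiers actually stay small since permutations at distance $\le 5$ move at most $10$ points. Both routes give integers, so there are no floating-point issues. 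I would use one route as the check on the other, at least for $n\le 7$.

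\textbf{Step 2: separation.} I would sort the integer vectors $(M_2,\dots,M_{2r})$ for each $n$ and count distinct classes, reproducing Table~\ref{tab:census}. Injectivity of $r=5$ on the $274\,668$ graphs for $n=9$, and on all smaller $n$, gives the first claim. Since $M_k$ are the power sums of $\operatorname{Spec}\Gamma(G)$, distinct moment vectors imply non-cospectrality, which confirms Conjecture~\ref{conj:reconstruction}. Minimality would be read off the same table. For $n=5,6$ the class count equals the graph count at $r=3$ but not at $r=2$. For $n=7$ this happens first at $r=4$, and for $n=8,9$ first at $r=5$ (for instance $12341<12346$ and $274642<274668$). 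I would exhibit explicit colliding pairs to certify the lower bounds, in line with Proposition~\ref{prop:counterexamples}.

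\textbf{Main obstacle.} The hard part is scale and reliability at $n=9$, and it is a matter of trust rather than mathematics. I would address it in two ways. First, I would validate the pipeline against brute-force regular spectra for $n\le5$. Second, I would independently recompute via the Young orthogonal form, taking $\sum_\lambda f^\lambda\operatorname{tr}((A_G^\lambda)^k)$ and rounding. The Young-form computation is done in floating point, so it serves only as a consistency check; the exact integer computation is what certifies the result.
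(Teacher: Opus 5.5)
Your plan is correct, but it reverses the paper's division of labour.

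\textbf{What the paper does.} It computes the refined spectra in Young's orthogonal form for every graph and reads off $M_{2j}/n!$ in floating point. Exact integer word counts $W_{2j}(E(G))$ on $S_n$ are used only for the residual near-collisions: the $21$ ambiguous classes at $n=7$, the five pairs at $n=8$, and the $26$ pairs at $n=9$, where $W_8$ agrees and $W_{10}$ differs.

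\textbf{What you do instead.} You make the exact integer count the certificate for every graph and keep the Young-form computation only as a cross-check.

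\textbf{Comparison.} The paper's route is cheaper, since exact counting on $S_9$ is needed for only $26$ pairs. It also produces the refined spectra themselves, which the surrounding section needs for the CayleyPy and support-collision findings. Its cost is that at $n=9$ the separation of all remaining graphs rests on a floating-point error estimate: agreement to relative precision $10^{-10}$ against an error of order $10^{-12}$. That is an estimate, not a certified bound. Your route removes this dependence. Both the separation by $(M_2,\dots,M_{10})$ and the minimality claims become exact integer facts.

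\textbf{Feasibility of your route.} It is affordable, for three reasons.
\begin{enumerate}[(1)]
\item A closed walk of length $10$ from $1$ stays in the ball of radius $5$. That ball is contained in the set of $94\,852$ permutations of $S_9$ with at least four cycles.
\item Let $w_r(\sigma)$ be the number of words of length $r$ with product $\sigma$. Reversing a word inverts its product, so $W_{2r}=\sum_\sigma w_r(\sigma)^2$. Hence a single five-step count yields all of $W_2,\dots,W_{10}$.
\item Every quantity involved is at most $36^{10}<2^{63}$, so $64$-bit integer arithmetic suffices.
\end{enumerate}

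Of your two exact routes, make the walk count the primary one. At order $10$ the subgraph-count route needs $N_F(G)$ for every support of rank at most $5$ with at most $10$ edges, disconnected supports included, which is considerably heavier.
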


\begin{proof}[Method]
For each graph we computed the refined spectrum numerically and the
normalized moments $M_{2j}(G)/n!$, which are integers.  For $j\le4$ the
floating-point values were within $10^{-7}$ of integers, so the
partition into classes by $(M_2,\dots,M_8)$ is exact.  For $n=8$ this
leaves five pairs of graphs, and for $n=7$ the classes of
$(M_2,M_4,M_6)$ leave $21$ classes.  In each such case we computed the
next moment \emph{exactly}, as the number $W_{2j}(E(G))$ of identity
words, by integer dynamic programming on $S_n$, and found that it
separates the graphs.

For $n=9$ the normalized moments of dense graphs are too large (of order
$36^{10}$) for exact rounding of floating-point values.  We therefore
regarded two graphs as possibly equal in $M_{2j}$ when the computed
values agreed to relative precision $10^{-10}$.  The numerical error is
of order $10^{-12}$, so graphs that differ at this precision are
certainly different.  Exactly $26$ pairs of graphs agree in
$M_2,\dots,M_8$ at this precision, and no two graphs agree in
$M_2,\dots,M_{10}$.  For each of the $26$ pairs we computed $W_8$ and
$W_{10}$ exactly on $S_9$: in every pair $W_8$ coincides and $W_{10}$
differs.  The graphs were generated by \texttt{geng}, so the list is
complete.
\end{proof}

Theorem~\ref{thm:computer} confirms, by an independent method, the
CayleyPy findings (1)--(2) for $n\le 9$, since separation by the regular
spectrum implies separation by the refined spectrum.  We also recomputed the support
collisions for $n\le 8$; their existence, and the fact that they are
rarer among connected graphs, agree with finding~(3).  The smallest
example is $n=5$: the diamond with a pendant edge at a vertex of degree
$3$, and the butterfly, have the same set of distinct Cayley eigenvalues
but different multiplicities.  We also recounted the
adjacency-cospectral graphs (last column of Table~\ref{tab:census}),
and our counts agree with those of Haemers and Spence \cite{HaemersSpence}.

\subsection{What the first moments do not see}

The following pairs show that the low moments alone are far from
determining the basic invariants discussed in Section~\ref{sec:first-moments}.
Edges are written $ij$ for $\{i,j\}$.

\begin{proposition}[Limits of the low moments]
\label{prop:counterexamples}
The moments $M_2,M_4,M_6$ do not determine the number of triangles, the
degree sequence, or the connectedness of $G$.  The moments
$M_2,M_4,M_6,M_8$ do not determine the number of triangles or the
degree sequence.  Explicitly, (a) and (b) are on the vertex set $[7]$,
and (c) is on $[9]$:
\begin{enumerate}[(a)]
\item $G_1=\{15,16,17,26,27,36,37,46,47,57,67\}$ and
$G_2=\{14,16,17,25,26,27,35,37,46,47,57\}$ are connected, with
$M_{2j}/7!=11,337,15837$ for $j=1,2,3$ for both.  However
$N_{K_3}(G_1)=5\neq4=N_{K_3}(G_2)$, and the degree sequences are
$(6,5,3,2,2,2,2)$ and $(5,3,3,3,3,3,2)$.  Their eighth moments are
$948029$ and $958353$.
\item $G_3=\{15,17,26,27,36,37,46,67\}$ is connected, and
$G_4=\{15,16,25,27,36,37,57,67\}$ is not (vertex $4$ is isolated).  Both
have $M_{2j}/7!=8,170,5186$ for $j=1,2,3$ and two triangles.  Their
eighth moments are $191786$ and $192546$.
\item The graphs
\[
\begin{aligned}
G_5&=\{16,17,19,26,28,29,37,38,39,48,49,59,69,79\},\\
G_6&=\{15,17,26,27,28,36,38,39,48,49,59,67,69,79\}
\end{aligned}
\]
are connected, with
$M_{2j}/9!=14,528,30010,2176660$ for $j=1,2,3,4$ for both.  However
$N_{K_3}(G_5)=4\neq3=N_{K_3}(G_6)$, and the degree sequences are
$(7,3,3,3,3,3,3,2,1)$ and $(5,4,4,3,3,3,2,2,2)$.  Their tenth moments
are $186799374$ and $187065334$.
\end{enumerate}
\end{proposition}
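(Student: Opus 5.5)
The plan is to verify each claimed pair directly with the universal support expansion, using Theorem~\ref{thm:first-moments} for $M_2,M_4,M_6$ and Theorem~\ref{thm:eighth} for $M_8$. For each graph I will tabulate the non-induced subgraph counts $N_F(G)$ for every support type with nonzero coefficient. For $M_6$ that means the thirteen types of the table. For $M_8$ it means the connected graphs on at most five vertices together with the sixteen disconnected types. The sums $\sum_F c_{2j}(F)N_F(G)$ then give the normalized moments, and I will check that they equal the displayed values $11,337,15837$, then $8,170,5186$, then $14,528,30010,2176660$.

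To keep the bookkeeping manageable, the disconnected counts will not be enumerated directly. For $N_{2K_2}$, $N_{P_3\sqcup K_2}$ and $N_{3K_2}$ I will use the same elimination identities as in the proofs of Corollaries~\ref{cor:triangle-free-sixth} and \ref{cor:regular-graphs}, now including the triangle terms. For example, $N_{2K_2}=\binom m2-N_{P_3}$, and $N_{P_3}=\sum_v\binom{d_v}2$ comes from the degree sequences.

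The negative invariants are then read off immediately:
\begin{itemize}
\item the triangle counts ($5$ vs.\ $4$, $2$ and $2$, $4$ vs.\ $3$);
\item the degree sequences;
\item connectedness, since in (b) vertex $4$ is isolated in $G_4$ but $G_3$ has a spanning path structure.
\end{itemize}
The last point is consistent with Theorem~\ref{thm:connectedness-test}, whose test necessarily involves information beyond $M_2,\dots,M_6$.

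As a sanity check, $M_4$ must agree in each pair, so by Corollary~\ref{cor:fourth-moment-invariant} the quantity $6N_{K_3}-N_{P_3}$ must agree. In (a), $\sum\binom{d_v}2$ equals $15+10+3+4\cdot1=32$ for $G_1$ and $10+5\cdot3+1=26$ for $G_2$. Then $6\cdot5-32=-2=6\cdot4-26$, which is consistent. The analogous check will be run for (c).

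The main obstacle is the size of the computation for the higher moments. Verifying $M_8$ for (c) requires counting about thirty support types on $9$ vertices, including all five-vertex connected subgraphs. The separating values $M_8$ in (a) and (b) and $M_{10}$ in (c) are harder still, because the full table of $c_{10}$ is not given. For these I would not use the expansion at all. Instead I would compute $W_{2j}(E(G))$, the number of identity words, exactly by integer dynamic programming over $S_n$, as in the method described for Theorem~\ref{thm:computer}. The state is the current permutation and one transposition is applied per step, so with $|S_9|=362880$ states and $10$ steps the computation is cheap. The same routine provides an independent check of the lower moments. This makes the proposition a finite, certified computation rather than a conceptual argument.
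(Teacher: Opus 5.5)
Your proposal is correct, but only partly follows the paper. The paper's proof computes every value in the statement, the coinciding moments and the separating ones alike, as the exact number $W_{2j}(E(G))$ of identity words by integer dynamic programming on $S_n$. It then cross-checks these against moments read off from the numerically computed refined spectra. You use the dynamic programming only for the separating moments and as a cross-check. The coincidences you certify instead through the support expansion, with the tables of Theorems~\ref{thm:first-moments} and~\ref{thm:eighth}. The elimination identities you need hold for all graphs, in the form $N_{P_3\sqcup K_2}=(m-2)N_{P_3}-2N_{P_4}-3N_{K_{1,3}}-3N_{K_3}$ and $N_{3K_2}=\binom m3-N_{K_3}-N_{K_{1,3}}-N_{P_4}-N_{P_3\sqcup K_2}$.

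What your route gives is a hand-checkable certificate at orders $\le 6$ that also shows \emph{how} the coincidence arises. In (b), $G_3$ and $G_4$ agree in $m$, $N_{P_3}$, $N_{K_3}$ and $N_{C_4}$, and the remaining weighted differences $30\cdot2-42\cdot2-60\cdot2+90\cdot2+96\cdot2-180\cdot2+132\cdot1$ cancel to $0$, so both give $5186$. The same bookkeeping for (a) gives $15837$ for both graphs.

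What it does not give is extra independence at order $8$. The $c_8$ table was itself produced by the same kind of machine computation. Moreover, (c) needs over forty support types counted on nine vertices, so that step is no more elementary than the direct dynamic programming. The dynamic programming needs no coefficient tables and is anyway the only practical tool for $M_{10}$.

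One small slip: if by ``spanning path structure'' you mean a Hamiltonian path, $G_3$ has none. Its leaves $4$ and $5$ would have to be the endpoints, and that forces an edge $23$. You only need connectivity, which the spanning tree $\{15,17,27,37,67,46\}$ makes clear.
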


\begin{proof}
Direct computation of the numbers of identity words by integer dynamic
programming on $S_n$.  The values agree with those obtained from the
numerically computed refined spectra.
\end{proof}

In (a) and (b) the eighth moment, and in (c) the tenth moment,
separates the two graphs, and hence so does the full spectrum.  In (b)
this also follows from Theorem~\ref{thm:connectedness-test}.  So these
examples
show that the moment information in Section~\ref{sec:first-moments} is
strictly weaker than the spectrum, and that the triangle problem below
cannot be settled using $M_2,\dots,M_8$ alone.

\begin{remark}[Pairs agreeing up to $M_8$]
For $n=8$ there are exactly five pairs of non-isomorphic graphs with
equal $M_2,M_4,M_6,M_8$, and for $n=9$ there are exactly $26$ such pairs,
all connected.  For $n=8$, in all five pairs the two graphs have the same
degree sequence and the same number of triangles.  For $n=9$ this holds
for $23$ of the $26$ pairs; the exceptions include pair~(c) above.  For $n=8$, four of the five
pairs are Laplacian cospectral, so they also have the same
$(n-1,1)$-component of the refined spectrum.  None of them is adjacency
cospectral.  An example is the pair of triangle-free graphs
\[
\{15,17,18,25,27,28,36,37,38,46,47,48\},\quad
\{15,16,17,18,25,28,36,37,38,46,47,48\},
\]
with $M_{10}/8!=60025212$ and $60306492$ respectively.  Thus $M_2,\dots,M_8$ determine the number of triangles and the degree
sequence for $n\le 8$, but not for $n=9$.
\end{remark}

\section{Consequences and remaining problems}

The results above give a finite-type spectral theory for the first
Cayley moments.  They also indicate several precise ways in which the
paper can be pushed further.

\subsection{Triangle separation}

The fourth moment determines $6N_{K_3}(G)-N_{P_3}(G)$.  Thus triangle
relations are already visible in the Cayley spectrum at order four.
However, this invariant mixes triangles with adjacent edge-pairs, and by
Proposition~\ref{prop:counterexamples} the moments $M_2,M_4,M_6$ do
\emph{not} determine $N_{K_3}(G)$.  A sharp next goal is therefore the
following.

\begin{question}[Triangle problem]
Is the number of triangles $N_{K_3}(G)$ determined by the full Cayley
spectrum of $\Gamma(G)$?  More modestly, is it determined by finitely
many even moments $M_2,M_4,\ldots,M_{2r}$?  By
Proposition~\ref{prop:counterexamples}(c), $r=4$ is not enough.
\end{question}

For $n\le9$ the first part has a positive answer by
Section~\ref{sec:computations}.  A positive answer in general would be
the first genuinely nonlinear reconstruction result separating the
triangle contribution from the adjacent-edge-pair contribution in the
fourth moment.

\subsection{Degree sequence problem}

On triangle-free graphs the fourth moment determines $\sum_v d_v^2$.
More generally, supports which are stars should control higher power sums
of the degrees, but the general case requires separating star
contributions from cycle and clique contributions.  This leads to the
following concrete conjecture.

\begin{conjecture}[Degree-power conjecture]
\label{conj:degree}
For each $q\geq 1$, the Cayley spectrum determines the power sum
\[
 p_q(G)=\sum_{v\in V(G)}d_v^q.
\]
Consequently, the Cayley spectrum determines the degree sequence of
$G$.
\end{conjecture}

The case $q=1$ follows from $M_2$.  For triangle-free graphs the case
$q=2$ follows from $M_4$, and Corollary~\ref{cor:triangle-free-sixth}
shows that the sixth moment determines a first combination involving the
claw $K_{1,3}$.  The general case remains open because clique and cycle
relations mix with the star counts.  By
Proposition~\ref{prop:counterexamples}(a), the moments $M_2,M_4,M_6$ do
not determine $p_2(G)$ for general graphs: for that pair, $p_2=86$ and
$p_2=74$.  By part (c) of the same proposition, $M_2,\dots,M_8$ do not
determine the degree sequence either.  For graphs with $n\le 9$ the conjecture holds by
Section~\ref{sec:computations}.

\subsection{Irreducible spectral data}

Proposition~\ref{prop:standard-laplacian} shows that the refined
irreducible spectrum contains the Laplacian spectrum in the standard
component.  Thus a useful intermediate invariant between the ordinary
Cayley spectrum and the graph $G$ is the collection
\[
 \left\{
 \operatorname{Spec}\left(\sum_{e\in E(G)}\rho_\lambda(\tau_e)\right)
 : \lambda\vdash n
 \right\}.
\]
This refined spectrum is representation-theoretically natural and may
be more accessible computationally than the undifferentiated regular
spectrum.

\begin{question}
Does the refined irreducible spectrum reconstruct $G$?  If not, does it
at least reconstruct all subgraph counts of $G$?
\end{question}

By the CayleyPy computation the answer to the first question is positive
for $n\le 9$.  Theorem~\ref{thm:token} identifies the two-row part of
the refined spectrum with the token-graph Laplacian spectra
$\operatorname{LSpec}F_k(G)$, $k\le n/2$.  The first new member of this
hierarchy, $(n-2,2)$, already has substantial reconstructive content.
Reference~\cite{ShapiroN22} proves that for a tree $T$ the pair consisting
of the ordinary Laplacian spectrum and the $(n-2,2)$-spectrum determines
separately all four three-edge support-forest counts
\[
 K_{1,3},\qquad P_4,\qquad P_3\sqcup K_2,\qquad 3K_2,
\]
and reports that this pair separates all Laplacian-cospectral trees in the
tested range through $15$ vertices.  Its weighted trace version is much
stronger still, as noted after Theorem~\ref{thm:weighted-fourth}.
A natural intermediate question is therefore whether the family of
Laplacian spectra of all token graphs $F_k(G)$ already determines $G$.

\subsection{Further computational tests}

Item (1) of the list of tests proposed in the previous version (full and
refined spectra of all graphs with at most eight vertices) has been
carried out, up to $n=9$; see Section~\ref{sec:computations}.  Since all
adjacency-cospectral pairs with $n\le 9$ are among the graphs checked,
item (2) is settled in this range as well.  The following tests remain
natural:
\begin{enumerate}[(1)]
\item test Godsil--McKay switched pairs and strongly regular cospectral
families with $n\ge 10$, where exhaustive enumeration is no longer
feasible;
\item determine, for each $n$, the smallest $r(n)$ such that
$M_2,\dots,M_{2r(n)}$ separate all graphs on $n$ vertices
($r(n)=3,3,4,5,5$ for $n=5,\dots,9$), and decide whether $r(n)$ is bounded;
\item use the eighth-moment formula (Theorem~\ref{thm:eighth}) and the
corresponding $M_{10}$ formula (which needs only connected supports on at
most six vertices) to search for triangular combinations
isolating $N_{K_3}(G)$ and then $N_{K_{1,3}}(G)$;
\item prove Conjecture~\ref{conj:closed} and determine the genus-zero
coefficients $c_{2(v-1)}(F)$ for all connected $F$.
\end{enumerate}

\section{Final remarks}

The main point of this paper is that the transposition Cayley spectrum
admits a tractable moment theory.  The regular trace converts spectral
moments into counts of identity words in edge-transpositions, while the
irreducible decomposition of $\mathbb C[S_n]$ gives a parallel
representation-theoretic formula.  These two descriptions suggest that
$\operatorname{Spec}(\Gamma(G))$ is a much finer invariant than the
ordinary adjacency spectrum of $G$.

The computations through order six are only the first step, but they
already show that nontrivial local data of $G$ is visible in the Cayley
spectrum.  The fourth moment detects the signed combination
$6N_{K_3}-N_{P_3}$, while the triangle-free sixth-moment specialization
recovers the next combination $5N_{K_{1,3}}+2N_{P_4}+12N_{C_4}$.  The
most natural next goal is to determine whether higher moments, possibly
together with irreducible components of the spectrum, can separate these
combined local invariants and then progressively recover all small
subgraph counts.  The rank filtration and the genus-zero Hurwitz
structure (Section~3) show that this is a problem about refined Hurwitz
numbers.  The computations of Section~\ref{sec:computations} show that
the reconstruction conjecture holds in every case that can currently be
checked exhaustively.

\vspace{1em}

\noindent
\textbf{Acknowledgements.}
The author is grateful to Ivan Koltsov and the CayleyPy team for
carrying out the computations of refined spectra for all graphs with at
most nine vertices and for sharing their results.

\end{document}